\newtheorem{theo}{Theorem}[section]
\newtheorem{cor}[theo]{Corollary}
\newtheorem{defi}[theo]{Definition}
\newcounter{listagem}
\newcommand{\blista}{\begin{list}{\roman{listagem})}{\usecounter{listagem}}}
\newcommand{\elista}{\end{list}}
\newcommand{\BR}{{\mathbb R}}
\newcommand{\BN}{{\mathbb N}}
\newcommand{\BZ}{{\mathbb Z}}
\newcommand{\BC}{{\mathbb C}}
\newcommand{\BH}{{\mathbb H}}
\newcommand{\e}{{\bf e}}
\newcommand{\cW}{{W}}
\newcommand{\beq}{\begin{equation}}
\newcommand{\eeq}{\end{equation}}
\newcommand{\beqn}{\begin{eqnarray}}
\newcommand{\eeqn}{\end{eqnarray}}
\newcommand{\ov}{\overline}
\newcommand{\ul}{\underline}
\newcommand{\Cl}{{C \kern -0.1em \ell}}
\newcommand{\R}{\mathbb{R}}
\newcommand{\f}{\mathfrak{f}}
\newcommand{\fm}{\mathfrak{f}^{\dagger}}
\begin{document}

\title{Numerical Clifford Analysis for Nonlinear Schr\"odinger Problem}

\pagestyle{myheadings} \markboth{P. Cerejeiras, N. Faustino and N.
Vieira}{Numerical Clifford Analysis for Nonlinear Schr\"odinger
Problem}

\author{P. Cerejeiras$^*$ \hspace*{1cm} N. Faustino\thanks{Departamento de Matem\'atica, Universidade de
Aveiro, Portugal}\hspace*{1cm} N. Vieira$^*$ \\
{\small pceres@mat.ua.pt \hspace*{1cm} nfaust@mat.ua.pt
\hspace*{1cm} nvieira@mat.ua.pt}} \maketitle

\begin{abstract}
The aim of this work is to study the numerical solution of the
nonlinear Schr\"odinger problem using a combination between Witt
basis and finite difference approximations. We construct a discrete
fundamental solution for the non-stationary Schr\"odinger operator
and we show the convergence of the numerical scheme. Numerical
examples are given at the end of the paper
\end{abstract}

{\bf Keywords:} Nonlinear Schr\"odinger equation, Parabolic Dirac
operators, Finite diffe\-rence methods

{\bf MSC 2000:} Primary: 65M06; Secundary: 35A08, 15A66, 65J15.

\section{Introduction}

In this paper we make use of Clifford analysis tools in order to
treat a well-known partial differential equation of mathematical
physics. This treatment is based on the work developed by K.
G\"urlebeck and W. Spr\"o\ss ig in ~\cite{GS} and it is (partially)
based on an orthogonal decomposition of the underlying function
space in terms of the subspace of null-solutions of the
corresponding Dirac operator. While the orthogonal decomposition of
G\"urlebeck and Spr\"o\ss ig  has been applied with success to PDE's
such as Lam\'e equations, Maxwell equations and Navier-Stokes
equations \cite{KK}, it works for the stationary case only.

In \cite{CKS} an alternative approach was proposed, based on an
adding of extra basis elements, namely, of a Witt basis. This
approach allows the application of the already existent techniques
of elliptic function theory developed in \cite{GS} to time-varying
domains. A suitable orthogonal decomposition for the underlying
function space is, therefore, obtained in terms of the kernel of the
positive parabolic Dirac operator and its range after application to
a Sobolev space with zero boundary values.

After some basic notions about Clifford algebras presented in the
next section, we will define, in Section 2, a generalization of the
parabolic Dirac operator introduced in \cite{CKS} and a
generalization of the Teodorescu and Cauchy-Bitsadze operators
presented in \cite{GS}. Moreover, using the previous definitions we
will obtain a factorization of our equation in terms of basis
elements of Witt basis and we obtain the fundamental solution for
our generic parabolic Dirac operator.

However, the integral representation formulae obtained via this
theoretical method are not suitable for an explicit computation of
the solution, due to unacceptable convergence rates of the
integrals' numerical approximation (see \cite{GS} for more details).
Hence, to avoid this backdraw it becomes necessary to study the
discrete analogues of the operators, namely discrete counterparts
for the single- and double-layer potentials. Contrary to difference
potentials introduced by Ryabenkij \cite{Ryaij}, where the
difference potentials are constructed by means of discrete Green
functions, we will introduce in Section 3 difference potentials
based on the discrete fundamental solution. An advantage of this
approach is that, contrary to discrete Green functions, we will
obtain an explicit expression for our discrete fundamental solution
$E_{h,-i\tau}$ which is independent of the choice or shape of the
domain. In Section 4 we prove the convergence of the discrete
counterparts of the analytic operators introduced in Section 2. This
will allow us to establish a convergent numerical scheme for the
linear non-stationary Schr\"odinger equation.

In Section 5 we will adapt the previous algorithm in order to solve
numerically the cubic Schr\"odinger equation and we will present in
Section 6 some simple numerical examples to show the consistency and
stability of our algorithm for different mesh sizes $h$ and $\tau.$


\section{Preliminaries}


\subsection{Clifford algebras}

Consider the $n$-dimensional vector space $\R^n$ endowed with a
standard orthonormal basis $\{ e_1, \cdots, e_n \}$  and satisfying
the multiplication rules $e_i e_j + e_j e_i = -2\delta_{i,j}.$

We define the universal Clifford algebra $\Cl_{0,n}$ as the
$2^{n}$-dimensional associative algebra with basis given by $ e_{0}
= 1$ and $ e_A = e_{h_1} \cdots e_{h_k},$ where $A = \{ h_1, \ldots,
h_k \} \subset N = \{ 1, \ldots, n \}$, for $1 \leq h_1 < \cdots <
h_k \leq n$. Each element $x \in \Cl_{0,n}$ will be represented by
$x=\sum_{A} x_A e_A$ and each non-zero vector $x = \sum_{j=1}^n x_j
e_j\in \BR^{n}$ has a multiplicative inverse given by
$\frac{-x}{|x|^2}$. We denote by $\ov{x}^{\Cl_{0,n}}$ the (Clifford)
conjugate of the element $x \in \Cl_{0,n},$ where
    \begin{eqnarray*}
    \ov{1}^{\Cl_{0,n}}=1 \qquad \ov{e_j}^{\Cl_{0,n}}
     = -e_j \qquad \ov{ab}^{\Cl_{0,n}}=\ov{b}^{\Cl_{0,n}}\ov{a}^{\Cl_{0,n}}.
    \end{eqnarray*}

We introduce the complexified Clifford algebra $\Cl_n$ as the
tensorial pro\-duct
$$    \mathbb{C} \otimes \Cl_{0,n} = \{ w=\sum_{A} z_A e_A , ~ z_A \in
    \mathbb{C}, A \subset N \},$$ where the imaginary unit interact
    with the basis elements as $i e_j = e_j i,~j= 1, \ldots, n.$

The conjugation is defined as $\ov{w} = \sum_{A}
\ov{z_A}^{\mathbb{C}} \ov{e_A}^{\Cl_{0,n}}.$\\

We consider the Dirac operator $D=\sum_{j=1}^{n} e_j\frac{\partial}
{\partial x_i}$ which has the property of factorizing the
$n$-dimensional Laplacian, that is, $D^2=-\Delta$. A $\Cl_n$-valued
function on an open domain $\underline \Omega,$ $u:\underline \Omega
\subset \BR^n \mapsto\Cl_n,$ is said to be {\it left-monogenic} if
it satisfies $Du=0$ on $\underline \Omega.$

Let now $\Omega \subset \BR^{n}\times \BR^+$ denote a bounded domain
with a sufficiently smooth boundary $\Gamma=\partial\Omega,$ while
$(0,T),$ with $T>0$, represents its projection on the time-domain. A
function $u:\Omega \mapsto \Cl_{n}$ has a representation $u=\sum_A
u_A e_A$ with $\BC $-valued components $u_A$. Properties such as
continuity will be understood component-wisely. In the following we
will use the short notation $L_p(\Omega)$, $C^k(\Omega)$, etc.,
instead of $L_p(\Omega,\Cl_n)$, $C^k(\Omega,\Cl_n)$. For more
details, see \cite{DSS}.

Taking into account \cite{CKS} we will imbed $\mathbb{R}^n$ into
$\mathbb{R}^{n+2}$. For that purpose we add two new basis elements
$\f$ and $\fm$ satisfying
    \begin{eqnarray}
         {\f}^2 = {\fm}^2 = 0, &  \f \fm + \fm \f = 1, &
         \f e_j +e_j \f =  \fm e_j +e_j \fm = 0, j=1,\cdots,n.
         \label{regras}
    \end{eqnarray}
The set $\{ \f, \, \fm\}$ is said to be a Witt basis for
$\mathbb{R}^2$ and it will allows us to create a suitable
factorization of the Schr\"odinger operator where only partial
derivatives are used.


\subsection{Factorization of time-evolution operators}

In this section we present a new method for factorizing the
Schr\"odinger equation,
    \begin{equation}
   (\pm i \partial_t -\Delta ) u(x,t)=0, ~~(x,t) \in \Omega,
   \label{Eq:1}
    \end{equation}
where $\Omega \subset \BR^n \times \BR^+$ denotes a bounded domain.
For this we will follow the ideas presented in \cite{CKS}, \cite{B}
and \cite{CV}.

\begin{defi}
For a function $u \in C^1(\Omega)$ we define the forward (resp.
backward) parabolic Dirac operator
    \begin{eqnarray}
    D_{x,\pm it}u= (D + \f \partial_t \pm i \fm)u, \label{I}
    \end{eqnarray}
where $D$ stands for the (spatial) Dirac operator.
\end{defi}
These operators factorize the correspondent backward/forward
time-evolution operator (\ref{Eq:1}), that is
    \begin{eqnarray}
    (D_{x, \pm it})^2 u = (\pm i \partial_t-\Delta) u. \label{II}
    \end{eqnarray}

We consider now the generic Stokes' Theorem.
\begin{theo}\label{Th:22}
For each $u, v \in W^1_p(\Omega),$ $1<p<\infty$ it holds
    \begin{eqnarray*}
    \int_{\partial \Omega} v d \sigma_{x,t} u & = & \int_{\Omega}  [(v D_{x, -it}) u + v ( D_{x, +it} u )] dx dt,
    \end{eqnarray*}
where  $d \sigma _{x,t} = (D_x+\f \partial_t) \rfloor dxdt$ stands
for the contraction of the homogeneous operator associated to $D_{x,
-it}$ with the volume element.
\end{theo}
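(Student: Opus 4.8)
The plan is to reduce the Clifford-valued identity to the classical Gauss divergence theorem applied component-wise, after recognizing that the integrand on the right-hand side is an exact divergence in the variables $(x,t)$.

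First I would expand the integrand using the definitions in (\ref{I}). Writing the right action of the backward operator as $v D_{x,-it} = \sum_{j=1}^n (\partial_{x_j} v) e_j + (\partial_t v)\f - i\, v \fm$ and the left action of the forward operator as $D_{x,+it} u = \sum_{j=1}^n e_j (\partial_{x_j} u) + \f(\partial_t u) + i\, \fm u$, the sum becomes
    \begin{eqnarray*}
    (v D_{x,-it}) u + v(D_{x,+it} u) = \sum_{j=1}^n \left[(\partial_{x_j} v) e_j u + v e_j (\partial_{x_j} u)\right] + \left[(\partial_t v)\f u + v \f (\partial_t u)\right].
    \end{eqnarray*}
The crucial point is that the zeroth-order contributions $-i\, v\fm u$ and $+i\, v \fm u$ cancel exactly; this cancellation is precisely what motivates pairing the backward operator $D_{x,-it}$ acting on the left with the forward operator $D_{x,+it}$ acting on the right.

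Next I would note that, since the basis elements $e_j$ and $\f$ are constant, the product rule gives $(\partial_{x_j} v) e_j u + v e_j(\partial_{x_j} u) = \partial_{x_j}(v e_j u)$ and $(\partial_t v)\f u + v\f(\partial_t u) = \partial_t(v \f u)$. Hence the right-hand integrand is the total $(x,t)$-divergence $\sum_{j=1}^n \partial_{x_j}(v e_j u) + \partial_t(v\f u)$. Applying the classical Gauss theorem to each $\BC$-valued component over $\Omega \subset \BR^n \times \BR^+$ converts the volume integral into the boundary integral $\int_{\partial\Omega} \left[\sum_{j=1}^n v e_j u\, n_j + v\f u\, n_t\right] dS$, where $(n_1,\dots,n_n,n_t)$ is the outward unit normal and $dS$ the surface measure. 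For $u,v \in W^1_p(\Omega)$ with $1<p<\infty$ I would first establish the formula for smooth $u,v$ and then extend by density of $C^\infty(\ov\Omega)$ in $W^1_p(\Omega)$, using boundedness of the trace operator $W^1_p(\Omega)\to L_q(\partial\Omega)$ (with the gain $q>p$ from the trace-space embedding) together with Hölder's inequality to keep the boundary product integrable, so that both sides depend continuously on $u,v$ in the $W^1_p$ norm.

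Finally I would identify this boundary integrand with $v\, d\sigma_{x,t}\, u$ by unwinding the contraction $d\sigma_{x,t} = (D_x + \f\partial_t)\rfloor\, dx\,dt$: contracting the homogeneous first-order part $\sum_j e_j\partial_{x_j} + \f\partial_t$ with the volume form $dx_1\wedge\cdots\wedge dx_n\wedge dt$ produces $\sum_j e_j\,(-1)^{j-1}\,dx_1\cdots\widehat{dx_j}\cdots dx_n\,dt + \f\,(-1)^n\,dx_1\cdots dx_n$, whose restriction to $\partial\Omega$ equals $\sum_j e_j n_j\, dS + \f\, n_t\, dS$ with the correct orientation. The main obstacle is bookkeeping: keeping the left/right placement of the Clifford factors consistent throughout, so that $v$ stays to the left and $u$ to the right of each basis element after applying the anticommutation rules (\ref{regras}), and matching the orientation and sign conventions in the contraction so that the surface element $d\sigma_{x,t}$ is recovered exactly rather than merely up to sign.
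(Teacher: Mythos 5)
Your argument is correct and is the standard proof of this identity: the paper itself gives no proof (it defers to \cite{CKS}), and the proof there proceeds exactly as you do, by cancelling the $\mp i\,v\fm u$ zeroth-order terms, recognizing the remaining integrand as the exact divergence $\sum_j \partial_{x_j}(v e_j u)+\partial_t(v\f u)$, and applying the component-wise Gauss theorem together with the identification of the contracted surface element. The only point worth flagging is that your closing density step silently needs the boundary and volume products of two $W^1_p$ functions to be integrable, which for small $p$ really requires pairing $W^1_p$ with $W^1_q$, $1/p+1/q=1$ (a looseness already present in the theorem's own statement, not introduced by you).
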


For the proof of this theorem we refer to \cite{CKS}.

We shall construct a fundamental solution for the backward parabolic
Dirac operator $D_{x,-it}$ in terms of a fundamental solution of the
backward Schr\"o\-din\-ger operator. We recall that the function
\begin{eqnarray}
    e_-(x,t) & = &  i \frac{H(t)}{(4\pi i t)^{n/2}} \exp \left(i\frac{|x|^2}{4t}
    \right) \label{Form:2}
    \end{eqnarray}
is a fundamental solution for the backward Schr\"odinger operator
since it satisfies
$$ (-i\partial_t - \Delta)e_-(x,t) =     e_-(x,t)(-i\partial_t-\Delta)  = \delta(x, t)$$
in distributional sense. Therefore, we have
\begin{defi}
    Given a fundamental solution $e_-=e_-(x,t)$ for the backward
    Schr\"odinger operator we have as a fundamental solution $E_-=E_-(x,t)$ for
    the backward parabolic Dirac operator $D_{x,-it}$ the function
 \begin{eqnarray}
    E_-(x,t) & = &  e_-(x,t) D_{x,-it} \nonumber \\
    & = & \frac{H(t)\exp \left(i \frac{|x|^2}{4t} \right) }{(4 \pi i t)^{n/2}}
    \left( -\frac{x}{2t} + \f \left( \frac{ |x|^2}{4t^2}  -i\frac{n}{2t} \right) + \fm
    \right). \label{Form:1}
    \end{eqnarray}
\end{defi}

Using the fundamental solution (\ref{Form:1}) and the generic
Borel-Pompeiu formula we construct the adequate Teodorescu and
Cauchy-Bitsadze operators.
    \begin{defi}\label{Def:24}
    For a function $u \in L_p(\Omega), ~1<p<\infty,$ we define the correspondent Teodorescu and Cauchy-Bitsadze
    operators, respectively,  as
    \begin{eqnarray*}
    Tu(x_0,t_0) & = & \int_{\Omega} E_{-}(x-x_0,t-t_0)u(x,t)dx dt,  \\
    Fu(x_0,t_0) & = & \int_{\partial \Omega} E_{-}(x-x_0,t-t_0)d\sigma_{x,t} u(x,t).
    \end{eqnarray*}
    \end{defi}

We also have the following decomposition (c.f.~\cite{CV}).

\begin{theo}
The space $L_p(\Omega)$, $1<p \leq 2$ allows the direct
decomposition
$$    L_p(\Omega) =  L_p(\Omega) \cap \textrm{ker}\left( D_{x,+it}\right) \oplus D_{x,+it}
    \left( \stackrel{\circ}{W_p^{1}} \left(\Omega
    \right)\right),$$where $\stackrel{\circ}{W_p^{1}} \left(\Omega
    \right)$ denotes the space of all functions in the Sobolev space $W_p^{1} \left(\Omega
    \right)$ with zero-boundary values.
\end{theo}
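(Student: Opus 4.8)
The plan is to establish the Hodge-type decomposition $L_p(\Omega) = \left(L_p(\Omega) \cap \ker D_{x,+it}\right) \oplus D_{x,+it}\left(\stackrel{\circ}{W_p^{1}}(\Omega)\right)$ by means of the Borel--Pompeiu representation associated to the operator $D_{x,-it}$ together with the Teodorescu and Cauchy--Bitsadze operators of Definition \ref{Def:24}. The central analytic fact I would invoke is that the Teodorescu operator $T$ inverts the parabolic Dirac operator up to a boundary term: for $u \in L_p(\Omega)$ the Borel--Pompeiu formula reads $u = Fu + T(D_{x,+it}u)$ on $\Omega$, where $Fu$ is a null-solution of $D_{x,+it}$. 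This is the exact analogue of the classical Cauchy--Pompeiu formula in elliptic Clifford analysis, and the fundamental solution $E_-$ in (\ref{Form:1}) is precisely what makes it hold for $D_{x,-it}$ by virtue of Theorem \ref{Th:22}.

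First I would verify that $T$ maps $L_p(\Omega)$ into $W_p^1(\Omega)$ and satisfies $D_{x,+it}\, T = \mathrm{id}$ on $L_p(\Omega)$, which follows from $E_-$ being the right-hand fundamental solution of $D_{x,+it}$ (the adjoint relation to $D_{x,-it}$). Next I would introduce the projection operators
    \begin{eqnarray*}
    P u = F\left(\left. T u \right|_{\partial\Omega}\right), \qquad Q u = u - P u,
    \end{eqnarray*}
modeled on the Bergman--Hodge projections of \cite{GS}, and show that $P$ is a bounded projection of $L_p(\Omega)$ onto $L_p(\Omega)\cap\ker D_{x,+it}$ while $Q$ projects onto the range $D_{x,+it}(\stackrel{\circ}{W_p^{1}}(\Omega))$. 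The image of $P$ lands in the kernel because $Fv$ is monogenic for any boundary datum $v$; the image of $Q$ lies in the range because $Qu = D_{x,+it}(Tu)$ with $Tu$ having zero boundary values by construction, so $Tu \in \stackrel{\circ}{W_p^{1}}(\Omega)$.

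To obtain the \emph{directness} of the sum I would argue that the two summands intersect trivially: if $w \in \ker D_{x,+it}$ equals $D_{x,+it}\varphi$ for some $\varphi \in \stackrel{\circ}{W_p^{1}}(\Omega)$, then applying the Cauchy--Bitsadze formula and using that $\varphi$ vanishes on $\partial\Omega$ forces $w = 0$; this is where the zero-boundary-value hypothesis is essential, since it kills the boundary integral $F\varphi$. Combined with $u = Pu + Qu$, this gives the decomposition. The main obstacle I expect is the mapping property $T : L_p(\Omega) \to W_p^1(\Omega)$ together with the $L_p$-boundedness of the boundary projection $P$ for the full range $1 < p \le 2$: unlike the elliptic case, the parabolic kernel $E_-$ carries the Schr\"odinger singularity $\exp\!\left(i|x|^2/(4t)\right)/t^{n/2}$, which is oscillatory and only conditionally integrable near $t = 0$, so the usual Calder\'on--Zygmund estimates must be replaced by the anisotropic (parabolic) scaling $x \sim t^{1/2}$ and a careful analysis of the singular integral in this mixed homogeneity. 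Establishing these continuity bounds, rather than the purely algebraic splitting, is the delicate heart of the argument, and it is presumably the reason the statement is restricted to $1 < p \le 2$.
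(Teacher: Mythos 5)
The paper itself offers no proof of this theorem: it is imported from \cite{CV} (``We also have the following decomposition (c.f.~\cite{CV})''), so there is no internal argument to compare against, and your outline must stand on its own. It invokes the right circle of ideas (Borel--Pompeiu formula, Teodorescu transform $T$, Cauchy--Bitsadze transform $F$, a Bergman--Hodge splitting), but two of its central steps fail. The projection you define, $Pu = F\left(\left.Tu\right|_{\partial\Omega}\right)$, is identically zero \emph{by the very identities you assume}: applying your Borel--Pompeiu formula $v = Fv + T(D_{x,+it}v)$ to $v = Tu$ and using $D_{x,+it}T=\mathrm{id}$ gives $Tu = F\left(\left.Tu\right|_{\partial\Omega}\right) + Tu$, hence $F\left(\left.Tu\right|_{\partial\Omega}\right)=0$; the trace of the Teodorescu transform always lies in the kernel of the interior Cauchy transform. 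The genuine Bergman projection in the G\"urlebeck--Spr\"o\ss ig calculus is $F\,(\mathrm{tr}\,TF)^{-1}\,\mathrm{tr}\,T$, and proving invertibility of $\mathrm{tr}\,TF$ on the appropriate boundary space is the nontrivial content, not an omitted detail. Relatedly, your claim that $Tu$ has zero boundary values ``by construction'' is false --- $T$ is a volume potential with generically nonzero trace --- and if it were true the kernel summand would be trivial and the whole of $L_p(\Omega)$ would equal $D_{x,+it}\left(\stackrel{\circ}{W_p^{1}}(\Omega)\right)$.

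The directness argument is also circular. If $w\in\ker D_{x,+it}$ and $w = D_{x,+it}\varphi$ with $\varphi\in\stackrel{\circ}{W_p^{1}}(\Omega)$, then Borel--Pompeiu together with $\left.\varphi\right|_{\partial\Omega}=0$ yields only $\varphi = Tw$, which is consistent with any such $w$ and does not force $w=0$. What is actually needed is that $(i\partial_t-\Delta)\varphi = D_{x,+it}w = 0$ with $\varphi=0$ on $\partial\Omega$ implies $\varphi=0$, i.e.\ uniqueness for the Dirichlet problem for the Schr\"odinger operator on $\Omega$ with data on the full parabolic boundary; the corresponding solvability is what gives surjectivity of the sum, and both are the real substance behind the restriction $1<p\leq 2$, handled in \cite{CKS} and \cite{CV}. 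You correctly flag the oscillatory, only conditionally integrable singularity of $E_-$ as an obstacle to the $L_p$ mapping bounds, but even granting those bounds the decomposition would not follow, because the projections as you have set them up are degenerate.
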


The previous decomposition of the $L_p$-space allows us to establish
two projections operators.

\begin{defi}\label{def:21} Let $1<p \leq 2.$ We define the projectors
\begin{equation}
P : L_p(\Omega) \rightarrow L_p(\Omega) \cap \textrm{ker}\left(
D_{x,+it}\right) \label{cont_P}
\end{equation}
and
\begin{equation}
Q : L_p(\Omega) \rightarrow D_{x,+it} \left(
\stackrel{\circ}{W_p^{1}} \left(\Omega     \right)\right).
\label{cont_Q}
\end{equation}
\end{defi}

\begin{theo} \label{Th:26}Let $f \in L_p(\Omega),$ for $ 1<p \leq 2.$ The solution of the forward linear Schr\"odinger problem
    $$\left \{
    \begin{array}{rcl}
 i \frac{\partial u}{\partial t} - \Delta u & = & f ~ in ~ \Omega  \\
    u & = & 0 ~on ~ \partial \Omega
    \end{array}
    \right. $$
is then given by $u=TQT f.$
\end{theo}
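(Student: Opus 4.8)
The plan is to mimic the G\"urlebeck--Spr\"o\ss ig treatment of the Dirichlet problem for the Laplacian, with the spatial Dirac operator replaced by the forward parabolic operator $D_{x,+it}$ and exploiting the factorization $(D_{x,+it})^2 = i\partial_t - \Delta$ of (\ref{II}). The entire argument rests on three identities that I would establish first: (i) the generic Borel--Pompeiu formula $Fu + T(D_{x,+it}u) = u$, valid for every $u \in W^1_p(\Omega)$; (ii) the right-inverse property $D_{x,+it}\,T = I$ on $L_p(\Omega)$; and (iii) the reduction $T(D_{x,+it}u) = u$ for every $u \in \stackrel{\circ}{W_p^{1}}(\Omega)$. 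Identity (i) follows by inserting $v = E_-(x-x_0,t-t_0)$ into Stokes' Theorem \ref{Th:22} and using that $E_-$ is the fundamental solution of $D_{x,-it}$, so that the contribution $v\,D_{x,-it}$ collapses to the Dirac delta and reproduces $u(x_0,t_0)$, while the boundary term is precisely $Fu$ from Definition \ref{Def:24}. Identity (iii) is then the special case of (i) in which $Fu$ vanishes because $u$ has zero trace on $\partial\Omega$.

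With these tools in hand, I would set $w := Tf$, so that (ii) gives $D_{x,+it}w = f$, and split $w$ along the direct decomposition of $L_p(\Omega)$ via the projectors of Definition \ref{def:21}, writing $w = Pw + Qw$ with $Pw \in \ker D_{x,+it}$ and $Qw \in D_{x,+it}(\stackrel{\circ}{W_p^{1}})$. By definition of the range component there is a $v \in \stackrel{\circ}{W_p^{1}}(\Omega)$ with $Qw = D_{x,+it}v$; applying $T$ and invoking (iii) yields $v = T(D_{x,+it}v) = TQw = TQTf$. Hence the candidate solution $u := TQTf$ coincides with $v$ and in particular lies in $\stackrel{\circ}{W_p^{1}}(\Omega)$, so that the homogeneous boundary condition $u = 0$ on $\partial\Omega$ is automatically satisfied.

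It then remains to verify the equation. From $u = v$ and $Qw = D_{x,+it}v$ I obtain $D_{x,+it}u = Qw$, and applying $D_{x,+it}$ once more while writing $Qw = w - Pw = Tf - Pw$ gives $(D_{x,+it})^2 u = D_{x,+it}(Tf) - D_{x,+it}(Pw) = f - 0 = f$, using the right-inverse property (ii) together with $Pw \in \ker D_{x,+it}$. By the factorization (\ref{II}) this is exactly $(i\partial_t - \Delta)u = f$ in $\Omega$, which completes the argument.

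The step I expect to be most delicate is the rigorous justification of the right-inverse identity (ii), together with the forward/backward asymmetry it conceals: since $T$ is built from the \emph{backward} fundamental solution $E_-$, differentiating $Tf(x_0,t_0)$ under the integral sign in the variable $(x_0,t_0)$ turns the forward operator $D_{x,+it}$ acting on the shift $E_-(x-x_0,t-t_0)$ into a sign-adjusted backward operator acting in $(x,t)$. One must check carefully that the algebraic, non-differentiated $\pm i\fm$ terms and the sign flips produced by the translation combine so that the convolution reproduces $f$ exactly, rather than a spurious combination. The admissibility of differentiating under the integral and the mapping properties of $T$ on the Sobolev scale, treated in \cite{CKS} and \cite{CV}, would be cited to make this precise.
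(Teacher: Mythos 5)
Your argument is correct and is essentially the one the paper relies on: the paper gives no proof of Theorem \ref{Th:26} itself, deferring to \cite{CV}, and that reference (like the G\"urlebeck--Spr\"o\ss ig framework it adapts) proceeds exactly as you do --- Borel--Pompeiu from Stokes' Theorem \ref{Th:22}, the right-inverse property $D_{x,+it}T=I$, the reduction $T(D_{x,+it}v)=v$ for $v$ with zero trace, and the $\ker\oplus\mathrm{im}$ decomposition of Definition \ref{def:21}. The forward/backward sign bookkeeping you flag is indeed the only delicate point (with the paper's literal Definition \ref{Def:24} the differentiation of $E_-(x-x_0,t-t_0)$ in $(x_0,t_0)$ produces $-\delta$, which is why the classical sources build a minus sign into $T$), but this is a convention issue rather than a gap in your argument.
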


The proof of this theorem was made in \cite{CV} for the case of
$p=2$. However, we remark that it can easily be extended to $1< p
<2$. Moreover,

1) we can obtain dual results for the backward Schr\"odinger problem
by considering a fundamental solution for the forward parabolic
Dirac operator $D_{x,+it}$ on Theorem \ref{Th:22};

2) the above construction can easily be generalized for arbitrary
operators of type $a\partial_t -\Delta, $ where $a$ is a non-zero
complex parameter. Indeed, the case of $a=1$ gives the well-known
heat equation while for $a=i$ we have the non-stationary
Schr\"odinger equation.


\section{Discrete fundamental solution for the time-evo\-lu\-tion problem}


\subsection{Quaternionic matrix representation of the Witt Basis}

We use the matrix representation of the generators of the real
quaternions as defined in \cite{GS},
    \begin{eqnarray*}
    \e_0=\left(
    \begin{array}{cccc}
      1 & 0 & 0 & 0 \\
      0 & 1 & 0 & 0 \\
      0 & 0 & 1 & 0 \\
      0 & 0 & 0 & 1 \\
    \end{array}\right),  &
    \e_1=\left(
    \begin{array}{cccc}
      0 & 1 & 0 & 0 \\
      -1 & 0 & 0 & 0 \\
      0 & 0 & 0 & -1 \\
      0 & 0 & 1 & 0 \\
    \end{array}\right),
    \end{eqnarray*}
    \begin{eqnarray*}
    \e_2=\left(
    \begin{array}{cccc}
      0 & 0 & -1 & 0 \\
      0 & 0 & 0 & -1 \\
      1 & 0 & 0 & 0 \\
      0 & 1 & 0 & 0 \\
    \end{array}\right), &
    \e_3=\left(
    \begin{array}{cccc}
      0 & 0 & 0 & -1 \\
      0 & 0 & 1 & 0 \\
      0 & -1 & 0 & 0 \\
      1 & 0 & 0 & 0 \\
    \end{array}\right),
    \end{eqnarray*} as representatives of a discrete version of the spatial basis for the
quaternionic case.


\subsection{Finite differences and time evolution operators}

As already stated we want to investigate a finite difference scheme
based on the notion of a discrete fundamental solution as described in
\cite{GH}. We denote by
    $$\BR^3_h = \{h\ul{m}=(hm_1,hm_2,hm_3),m_l\in\BZ\} \mbox{ ~~ and ~~}
    \BR^+_\tau = \{k\tau, k \in \BZ^+\}$$
equidistant lattices corresponding to space and time discretization,
respectively. For a discrete function $u : \BR^3_h \times \BR^+_\tau
\rightarrow \BC^4 \sim \BC \otimes \BH,$   $u(h \ul m, k \tau )=
(u^0, u^1, u^2, u^3),$ we have the finite difference approximation
for the stationary Dirac operators given by {\small
\begin{eqnarray*}
    \begin{array}{ccc}
    D_h^{-+}u = \left(
    \begin{array}{c}
    -\partial_h^{-1}u^1-\partial_h^{-2}u^2-\partial_h^{-3}u^3\\
    \partial_h^{-1}u^0-\partial_h^{3}u^2+\partial_h^{2}u^3\\
    \partial_h^{-2}u^0+\partial_h^{3}u^1-\partial_h^{1}u^3\\
    \partial_h^{-3}u^0-\partial_h^{2}u^1+\partial_h^{1}u^2
    \end{array}
    \right),
    \end{array}
    \begin{array}{ccc} &
    D_h^{+-}u  =  \left(
    \begin{array}{c}
    -\partial_h^{1}u^1-\partial_h^{2}u^2-\partial_h^{3}u^3\\
    \partial_h^{1}u^0-\partial_h^{-3}u^2+\partial_h^{-2}u^3\\
    \partial_h^{2}u^0+\partial_h^{-3}u^1-\partial_h^{-1}u^3\\
    \partial_h^{3}u^0-\partial_h^{-2}u^1+\partial_h^{-1}u^2
    \end{array}
    \right),
    \end{array} \\ & \\
    \begin{array}{ccc}
u D_h^{-+} = \left(
    \begin{array}{c}
     -\partial_h^{-1}u^1-\partial_h^{-2}u^2-\partial_h^{-3}u^3\\
    \partial_h^{-1}u^0+\partial_h^{3}u^2-\partial_h^{2}u^3\\
    \partial_h^{-2}u^0-\partial_h^{3}u^1+\partial_h^{1}u^3\\
    \partial_h^{-3}u^0+\partial_h^{2}u^1-\partial_h^{1}u^2
    \end{array}
    \right),
    \end{array}
    \begin{array}{ccc} &
    u D_h^{+-}  =  \left(
    \begin{array}{c}
    -\partial_h^{1}u^1-\partial_h^{2}u^2-\partial_h^{3}u^3\\
    \partial_h^{1}u^0+\partial_h^{-3}u^2-\partial_h^{-2}u^3\\
    \partial_h^{2}u^0-\partial_h^{-3}u^1+\partial_h^{-1}u^3\\
    \partial_h^{3}u^0+\partial_h^{-2}u^1-\partial_h^{-1}u^2
    \end{array}
    \right),
    \end{array}
    \end{eqnarray*}}
where
    $$\partial_h^{\pm s}u^j = \frac{(u^j(h\ul{m}\pm
    h e_s,k\tau)-u^j(h\ul{m},k\tau))}{h},~j=0,1,2,3, ~s=1,2,3,$$
represent the spatial forward/backward difference operators. We
remark that these difference Dirac operators factorize the star
discretization of the Laplace operator, in the sense that
$$    D_h^{+-}D_h^{-+} = D_h^{-+}D_h^{+-}  = - \Delta_h \e_0 = \left( \sum_{s=1}^3
    \partial_h^{-s}\partial_h^s \right) \e_0.$$

Moreover, we also have the following (forward) time difference
operator (see \cite{GS}, \cite{GH})
    \begin{eqnarray*}
    \partial_{\tau}u^j (h\ul{m},k \tau ) & = &
    \frac{u^j(h\ul{m},
    (k+1)\tau)-u^j(h\ul{m},k\tau)}{\tau},~j=0,\cdots,3.
    \end{eqnarray*}

With the previous definitions we aim to construct a finite
difference approximation for the parabolic Dirac operators. For this
purpose we introduce the matrix representations
    \begin{gather}
    D_{h,\pm i\tau} =
 {\small    \left(
    \begin{array}{cc}
      {\bf 0} & D^{-+}_h \\
      D^{+-}_h & {\bf 0} \\
    \end{array} \right) + \left(
    \begin{array}{cc}
     \partial_{\tau} \e_0 & {\bf 0} \\
       {\bf 0} & \partial_{\tau} \e_0  \\
    \end{array} \right) \gamma^+
    \pm \left(
    \begin{array}{cc}
     i \e_0 & {\bf 0} \\
       {\bf 0} & i \e_0  \\
    \end{array} \right) \gamma^-,} \label{D-+htau}
    \end{gather} where $\gamma^+, \gamma^-$ denote elements which satisfy the following matricial
    operations
\begin{gather}
\gamma^\pm \left(
    \begin{array}{cc}
     A & B \\
       C & D  \\
    \end{array} \right) = \left(
    \begin{array}{cc}
     A & -B \\
      - C & D  \\
    \end{array} \right) \gamma^\pm, \nonumber \\
     (\gamma^\pm)^2 = 0, \label{actionWitt} \\
     \gamma^+ \gamma^- + \gamma^- \gamma^+ = id. \nonumber
\end{gather}

Using the properties of the previous operators and taking account
the multiplication rules (\ref{actionWitt}) we obtain the following
relation
 {\small \begin{gather}
(D_{h,\pm i\tau} )^2 = \left[ \left(
    \begin{array}{cc}
      {\bf 0} & D^{-+}_h \\
      D^{+-}_h & {\bf 0} \\
    \end{array} \right) + \left(
    \begin{array}{cc}
     \partial_{\tau} \e_0 & {\bf 0} \\
       {\bf 0} & \partial_{\tau} \e_0  \\
    \end{array} \right) \gamma^+
    \pm \left(
    \begin{array}{cc}
     i \e_0 & {\bf 0} \\
       {\bf 0} & i \e_0  \\
    \end{array} \right) \gamma^-  \right]^2  \nonumber  \\
= \left[ \left(
    \begin{array}{cc}
      {\bf 0} & D^{-+}_h \\
      D^{+-}_h & {\bf 0} \\
    \end{array} \right) \right]^2 +  \left[ \left(
    \begin{array}{cc}
     \partial_{\tau} \e_0 & {\bf 0} \\
       {\bf 0} & \partial_{\tau} \e_0  \\
    \end{array} \right) \gamma^+  \right]^2 + \left[ \left(
    \begin{array}{cc}
     i \e_0 & {\bf 0} \\
       {\bf 0} & i \e_0  \\
    \end{array} \right) \gamma^-   \right]^2 + \nonumber \\
    \left[ \left(
    \begin{array}{cc}
      {\bf 0} & D^{-+}_h \\
      D^{+-}_h & {\bf 0} \\
    \end{array} \right)   \left(
    \begin{array}{cc}
     \partial_{\tau} \e_0 & {\bf 0} \\
       {\bf 0} & \partial_{\tau} \e_0  \\
    \end{array} \right) \gamma^+ +  \left(   \begin{array}{cc}
     \partial_{\tau} \e_0 & {\bf 0} \\
       {\bf 0} & \partial_{\tau} \e_0  +  \end{array} \right) \gamma^+  \left(
    \begin{array}{cc}
      {\bf 0} & D^{-+}_h \\
      D^{+-}_h & {\bf 0} \\
    \end{array} \right)  \right] \pm  \nonumber \\
   \left[ \left(
    \begin{array}{cc}
      {\bf 0} & D^{-+}_h \\
      D^{+-}_h & {\bf 0} \\
    \end{array} \right)   \left(
    \begin{array}{cc}
     i \e_0 & {\bf 0} \\
       {\bf 0} & i \e_0  \\
    \end{array} \right) \gamma^- +  \left(   \begin{array}{cc}
    i \e_0 & {\bf 0} \\
       {\bf 0} & i \e_0  +  \end{array} \right) \gamma^-  \left(
    \begin{array}{cc}
      {\bf 0} & D^{-+}_h \\
      D^{+-}_h & {\bf 0} \\
    \end{array} \right)  \right] \pm  \nonumber \\
   \left[ \left(
    \begin{array}{cc}
     \partial_{\tau} \e_0 & {\bf 0} \\
       {\bf 0} & \partial_{\tau} \e_0  \\
    \end{array} \right) \gamma^+   \left(
    \begin{array}{cc}
     i \e_0 & {\bf 0} \\
       {\bf 0} & i \e_0  \\
    \end{array} \right) \gamma^- +  \left(   \begin{array}{cc}
    i \e_0 & {\bf 0} \\
       {\bf 0} & i \e_0  +  \end{array} \right) \gamma^-  \left(
    \begin{array}{cc}
     \partial_{\tau} \e_0 & {\bf 0} \\
       {\bf 0} & \partial_{\tau} \e_0  \\
    \end{array} \right) \gamma^+  \right]   \nonumber \\
 = \left(
    \begin{array}{cc}
      -\Delta_h & {\bf 0} \\
      {\bf 0} & -\Delta_h \\
    \end{array} \right)  + \left[ \left(
    \begin{array}{cc}
      {\bf 0} & \partial_{\tau} D^{-+}_h \\
      \partial_{\tau} D^{+-}_h & {\bf 0} \\
    \end{array} \right)   +   \left(
    \begin{array}{cc}
      {\bf 0} & -\partial_{\tau} D^{-+}_h \\
      -\partial_{\tau} D^{+-}_h & {\bf 0} \\
    \end{array} \right)  \right] \gamma^+ \pm   \nonumber \\
 \left[ \left(
    \begin{array}{cc}
      {\bf 0} & iD^{-+}_h \\
      iD^{+-}_h & {\bf 0} \\
    \end{array} \right)   +  \left(
    \begin{array}{cc}
      {\bf 0} & -i D^{-+}_h \\
      -i D^{+-}_h & {\bf 0} \\
    \end{array} \right)  \right] \gamma^-  \pm  \left(
    \begin{array}{cc}
     i \partial_{\tau} \e_0 & {\bf 0} \\
       {\bf 0} & i \partial_{\tau} \e_0  \\
    \end{array} \right) (\gamma^+    \gamma^- +    \gamma^-
    \gamma^+)   \nonumber  \\
= (-\Delta_h \pm i
\partial_{\tau}) \left(
    \begin{array}{cc}
 \e_0 & {\bf 0} \\
       {\bf 0} &  \e_0  \\
    \end{array} \right) \label{factH}
\end{gather} }
i.e., these operators factorize the difference discretization of our
time evolution operator (\ref{Eq:1}). Moreover, due to the fact that
the above finite difference operators $D^{-+}_h$, $D_h^{+-}$ and
$\partial_\tau$ are approximations of the Dirac operator $D$ and of
the time partial derivative operator $\partial_t,$ respectively (see
\cite{H}), we have that (\ref{D-+htau}) are a finite difference
approximations for the parabolic Dirac operators $D_{x,\pm it}.$


\subsection{Discrete fundamental solutions}
Based on the ideas presented in \cite{H} we introduce the discrete
fundamental solution for the Schr\"odinger difference operator $-i
\partial_{\tau}-\Delta_h$ as
 \begin{eqnarray}
    e_{h,-i\tau} (h\ul{m},k \tau) & = &  i H(k\tau ) \left (  1 + i \tau \Delta_h \right)^{k-1}
    \delta_{h} (h\ul{m}), \label{fundLapl}
    \end{eqnarray}
where $H$ denotes the Heaviside function and $$\begin{array}{cc}
    \delta_{h}(h\ul{m})  =  \left\{
    \begin{array}{ccc}
     \frac{1}{h^3} & \mbox{if} & h\ul{m}=  {\bf 0} \\
    0 & \mbox{if} & h\ul{m} \neq   {\bf 0}
    \end{array}
    \right., &     \delta_{\tau}(k\tau)  =  \left\{
    \begin{array}{ccc}
    \frac{1}{\tau} & \mbox{if} & k\tau= 0 \\
    0 & \mbox{if} & k\tau \neq   0
    \end{array}
    \right.,
    \end{array} $$ are the discrete analogues of the Dirac delta
    function in $\BR^3_h$ and $\BR_{\tau},$ respectively. Easy calculations show that, indeed,
    we have
\begin{equation}
{\small  (-i \partial_{\tau}-\Delta_h) e_{h,-i\tau} (h \ul m, k\tau)
= e_{h,-i\tau} (-i
\partial_{\tau}-\Delta_h) (h \ul m, k\tau)= \delta_\tau (k\tau) \delta_h (h \ul
m).}
\label{SchFundSolution}
    \end{equation}

By the factorization property (\ref{factH}), we have for the
discrete fundamental solution of the operator $D_{h,-i\tau}$ the
function
    \begin{eqnarray*}
    E_{h,-i\tau} & = & e_{h,-i\tau} D_{h,-i\tau}.
    \end{eqnarray*}

Moreover, straightforward calculations give the following matrix
representation for the discrete fundamental solution $E_{h,-i\tau}$
 \begin{gather*}
    E_{h,-i\tau}(h\ul{m},k\tau) =  \\
   {\small  \left[  \left(
    \begin{array}{cc}
      {\bf 0} & D^{-+}_h e_{h,-i\tau}  \\
      D^{+-}_h e_{h,-i\tau} & {\bf 0} \\
    \end{array} \right) +  \partial_{\tau}e_{h,-i\tau}  \left(
    \begin{array}{cc}
      \e_0 & {\bf 0} \\
       {\bf 0} &  \e_0  \\
    \end{array} \right) \gamma^+
    - i  e_{h,-i\tau} \left(
    \begin{array}{cc}
      \e_0 & {\bf 0} \\
       {\bf 0} &  \e_0  \\
    \end{array} \right) \gamma^- \right] }
    \end{gather*}

However, it remains to prove that the discrete fundamental solution
    $e_{h,-i\tau}$ is indeed an approximation of the fundamental
    solution (\ref{Form:2}). This will be done in the next section.


\section{Discrete operator calculus}

We define the discrete $l_p$-spaces,  $1 \leq p < \infty,$ in the
usual way
$$g \in l_p(\BR^3_h \times\BR^+_{\tau} )$$
iff $$||g||_{l_p(\BR^3_h \times \BR^+_{\tau})} = \left(
\sum_{(h\underline m, \tau k) \in \BR^3_h \times \BR^+_{\tau}} h^3
\tau |g(h\underline m, \tau k)|^p \right)^{\frac{1}{p}} < \infty .$$

Henceforward, no distinction will be made between the function $u :
\Omega \rightarrow \BC^4$ and its  restriction $u=u(h\ul{m},k\tau)$
to the lattice $\Omega_{h,\tau} = \Omega \cap (\BR^3_h \times
\BR^+_{\tau}),$ this distinction being clear from the context.

\subsection{Behavior of the discrete fundamental solution}

We now study the behavior of the discrete fundamental solution
(\ref{fundLapl}) when $h$ and $\tau$ tend to zero and we prove that
it converges in $l_1$-sense to the restriction to the grid of the
 fundamental solution (\ref{Form:2}).

\begin{theo} \label{Th:41}
Let $\frac{\tau}{h^2} < \frac{1}{6 \pi^2}.$ Then for any bounded
domain $G \subset \BR^3$ it holds
$$|| e_{h, -i\tau} - e_-  ||_{l_1(G_h \times [0, +\infty)_\tau)} \rightarrow
0$$ as $h, \tau \rightarrow 0.$
\end{theo}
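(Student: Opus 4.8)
The plan is to move to the Fourier side, where the iterated difference operator in (\ref{fundLapl}) becomes an explicit Fourier multiplier, and then transfer the resulting estimates back to physical space. First I would use the semidiscrete Fourier transform on the lattice $\BR^3_h$, i.e. Fourier series over the dual cell $Q_h=[-\pi/h,\pi/h]^3$. Since $\delta_h\leftrightarrow 1$ and the symbol of $-\Delta_h$ is $d_h(\xi)=\frac{4}{h^2}\sum_{j=1}^3\sin^2(h\xi_j/2)$, definition (\ref{fundLapl}) becomes
$$ e_{h,-i\tau}(h\ul m,k\tau)=\frac{i\,H(k\tau)}{(2\pi)^3}\int_{Q_h}(1-i\tau d_h(\xi))^{k-1}\,e^{ih\ul m\cdot\xi}\,d\xi, $$
while the spatial Fourier transform of (\ref{Form:2}) is $iH(t)e^{-it|\xi|^2}$, giving
$$ e_-(h\ul m,k\tau)=\frac{i\,H(k\tau)}{(2\pi)^3}\int_{\BR^3}e^{-ik\tau|\xi|^2}\,e^{ih\ul m\cdot\xi}\,d\xi. $$
Subtracting and splitting the second integral over $Q_h$ and its complement, the difference decomposes into a \emph{symbol-error} term $\int_{Q_h}\big[(1-i\tau d_h(\xi))^{k-1}-e^{-ik\tau|\xi|^2}\big]e^{ih\ul m\cdot\xi}\,d\xi$ and a \emph{high-frequency tail} $\int_{\BR^3\setminus Q_h}e^{-ik\tau|\xi|^2}e^{ih\ul m\cdot\xi}\,d\xi$.

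Second, I would estimate the symbol error pointwise in $\xi\in Q_h$. A Taylor expansion gives $0\leq|\xi|^2-d_h(\xi)=O(h^2|\xi|^4)$, and writing $(1-i\tau a)^{k-1}=\exp\big((k-1)\log(1-i\tau a)\big)$ with $\log(1-i\tau a)=-i\tau a+O(\tau^2a^2)$ lets me compare $(1-i\tau d_h)^{k-1}$ first with $e^{-i(k-1)\tau d_h}$ and then with $e^{-ik\tau|\xi|^2}$. The real part $\frac{1}{2}\log(1+\tau^2 d_h^2)>0$ produces the growth factor $|(1-i\tau d_h)^{k-1}|=(1+\tau^2d_h^2)^{(k-1)/2}\leq e^{(k-1)\tau^2 d_h^2/2}$; this is exactly where the hypothesis $\frac{\tau}{h^2}<\frac{1}{6\pi^2}$ enters, since it forces $\tau d_h\leq 12\tau/h^2<2/\pi^2$ and thereby keeps both this amplitude factor and the accumulated phase error $(k-1)\tau^2 d_h^2$ under control on the relevant range of $k$.

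Third, I would transfer these bounds to the norm $l_1(G_h\times[0,+\infty)_\tau)$. Bounding $|e^{ih\ul m\cdot\xi}|=1$ makes the per-node error independent of $\ul m$, so the spatial summation over the $O(|G|/h^3)$ nodes of $G_h$ only contributes the factor $h^3\cdot(|G|/h^3)=|G|$; the remaining task is to sum the $\xi$-integrated symbol error and the tail over the time levels $k\geq0$, weighted by $\tau$, and to show that the total tends to $0$. For the tail I would use that $\BR^3\setminus Q_h$ recedes to infinity as $h\to0$, together with the oscillation of $e^{-ik\tau|\xi|^2}$ (non-stationary phase); for the symbol error I would split $Q_h$ into low frequencies, where the $O(h^2)$ consistency of $d_h$ forces smallness, and high frequencies, where the amplitude bound above prevents blow-up.

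I expect the main obstacle to be precisely this summation over all time levels. Unlike the heat kernel, the Schr\"odinger kernel (\ref{Form:2}) does not decay in modulus, so the per-slice errors are only conditionally small and their accumulation over $k$ is delicate; controlling it rigorously is where the condition $\frac{\tau}{h^2}<\frac{1}{6\pi^2}$ must do its real work, through the interplay between the consistency estimate at low frequencies and the oscillation plus amplitude control at high frequencies.
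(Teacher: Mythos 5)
Your strategy is genuinely different from the paper's: the paper gives no Fourier-side argument at all, but instead reduces the statement to the known $l_1$-convergence result for the discrete \emph{heat} kernel (Theorem 1 of \cite{GH2}, adapted to $n=3$), transferred by the imaginary-time identities $e_-(\cdot,\cdot)=ie(\cdot,i\cdot)$ and $e_{h,-i\tau}(\cdot,\cdot)=ie_{h,\tau}(\cdot,i\cdot)$, with the mesh-ratio condition attributed to the conical support of $(1+i\tau\Delta_h)^{k-1}\delta_h$. Your first two steps (the multiplier representation over $[-\pi/h,\pi/h]^3$ with symbol $d_h(\xi)=\frac{4}{h^2}\sum_j\sin^2(h\xi_j/2)$, and the splitting into symbol error plus high-frequency tail) are correct and would make the argument self-contained where the paper's is not.

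The genuine gap is the claim that $\frac{\tau}{h^2}<\frac{1}{6\pi^2}$ ``keeps the amplitude factor under control.'' It does not. Unlike the heat case, where a CFL condition forces $|1-\tau d_h(\xi)|\leq 1$, here $|1-i\tau d_h(\xi)|=(1+\tau^2 d_h(\xi)^2)^{1/2}>1$ for every $\xi$ with $d_h(\xi)\neq 0$, whatever the mesh ratio: the explicit scheme amplifies all nonzero frequencies. Near the corner of the dual cell one has $d_h\approx 12/h^2$, so at fixed physical time $T=k\tau$ the modulus of the discrete symbol is $(1+\tau^2 d_h^2)^{(k-1)/2}\approx\exp\bigl(cT\tau/h^4\bigr)=\exp\bigl(cT(\tau/h^2)h^{-2}\bigr)\rightarrow\infty$ as $h\rightarrow 0$; equivalently, your exponent $(k-1)\tau^2 d_h^2\leq \frac{2}{\pi^2}\cdot 12(k-1)\tau/h^2$ is unbounded for fixed $T$. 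Hence, once you bound $|e^{ih\ul m\cdot\xi}|=1$ and take absolute values inside the $\xi$-integral, the symbol-error term is not merely ``delicate to sum over $k$'' --- it already blows up on a single time slice. Any repair must retain the $\xi$-oscillation in the symbol-error term too (not only in the tail), or exploit the finite propagation speed of the discrete kernel together with the boundedness of $G$, which is precisely the role the paper assigns to the support cone. A second, independent obstruction to a term-by-term triangle inequality in $k$: $\|e_-(\cdot,k\tau)\|_{l_1(G_h)}\sim |G|(4\pi k\tau)^{-3/2}$, so $\tau\sum_{k\geq 1}(k\tau)^{-3/2}\sim\tau^{-1/2}\rightarrow\infty$; the two kernels must therefore be compared with cancellation near $t=0$ as well, which your decomposition does not yet provide.
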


The proof of this theorem is based on \cite{GH2}, Theorem 1, after
adaptation to space dimension $n=3$ and taking in account that our
solutions differ from the ones in the case of the heat operator by
the relations
$$e_-(\cdot, \cdot) = ie(\cdot, i \cdot)~~(continuous ~case)$$ and
$$e_{h,-i\tau}(\cdot, \cdot) = ie_{h,\tau}(\cdot, i \cdot), ~~(discrete
~case).$$ Moreover, due to the fact that the constructed discrete
fundamental solution $e_{h, -i\tau}$ has a conical support domain we
obtain the mesh-size condition  $\frac{\tau}{h^2} < \frac{1}{6
\pi^2}.$

We remark that Theorem \ref{Th:41} implies the
$l_1^{loc}$-convergence of (\ref{fundLapl}) to (\ref{Form:2}). Also,
as an immediate consequence we have
\begin{cor}
Under the conditions of Theorem \ref{Th:41} it holds
$$|| E_{h, -i\tau} - E_-  ||_{l_1(G_h \times [0, +\infty)_\tau)} \rightarrow
0$$ for any bounded discrete domain $G_h \subset \BR^3,$ as $h, \tau
\rightarrow 0.$
\end{cor}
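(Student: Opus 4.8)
The plan is to reduce convergence of $E_{h,-i\tau}$ to that of $e_{h,-i\tau}$, which Theorem~\ref{Th:41} already provides, by exploiting the defining relations $E_{h,-i\tau} = e_{h,-i\tau} D_{h,-i\tau}$ and $E_- = e_- D_{x,-it}$. Writing the difference $E_{h,-i\tau} - E_-$ componentwise via the explicit matrix representation displayed just before Section~4, I see that each block of $E_{h,-i\tau}$ is one of three types: a spatial difference $D_h^{\pm\mp} e_{h,-i\tau}$, a time difference $\partial_\tau e_{h,-i\tau}$, or the plain multiple $-i\,e_{h,-i\tau}$. Correspondingly, from~(\ref{Form:1}), the blocks of $E_-$ involve the spatial derivative $\Grad e_-$ (the $-x/(2t)$ term), the $\f$-component $e_-\,(|x|^2/4t^2 - in/2t)$, and the $\fm$-component $e_-$. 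So the task splits into showing that each discrete block converges in $l_1(G_h \times [0,+\infty)_\tau)$ to its continuous counterpart.

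First I would handle the easiest block: the $\fm$/$\gamma^-$ term is just $-i\,e_{h,-i\tau}$ versus $-i\,e_-$ (up to the scalar $i$ bookkeeping fixed by the relations in the Theorem~\ref{Th:41} discussion), so its $l_1$-convergence is immediate from Theorem~\ref{Th:41} itself. Next, for the time-difference block I would show that $\partial_\tau e_{h,-i\tau} \to \partial_t e_-$ in $l_1$; using the difference operator $-i\partial_\tau = \Delta_h$ on the fundamental solution away from the singularity (from~(\ref{SchFundSolution})), I can rewrite $\partial_\tau e_{h,-i\tau}$ in terms of $\Delta_h e_{h,-i\tau}$ and reduce this block to the spatial-difference case. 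For the spatial-difference blocks $D_h^{\pm\mp} e_{h,-i\tau}$, the natural approach is to compare them to $D_{x,-it}$ applied to $e_-$; since the finite-difference Dirac operators are consistent approximations of $D$ (as noted after~(\ref{factH})), I would write $D_h^{\pm\mp} e_{h,-i\tau} - \Grad e_- = D_h^{\pm\mp}(e_{h,-i\tau}-e_-) + (D_h^{\pm\mp} - \Grad)e_-$ and bound each term, the first by the consistency of the scheme together with Theorem~\ref{Th:41}, the second by smoothness of $e_-$ on the (compact, time-restricted) grid away from $t=0$.

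The main obstacle will be the singular behavior of $e_-$ and its derivatives near $t=0$ and $x=0$: the spatial difference quotients $D_h^{\pm\mp} e_{h,-i\tau}$ carry an extra factor of order $1/h$, so merely invoking the $l_1$-convergence of $e_{h,-i\tau}-e_-$ is not enough—I would need a \emph{uniform} control of the difference quotients, i.e. an $l_1$-bound on $D_h^{\pm\mp}(e_{h,-i\tau}-e_-)$ rather than on $e_{h,-i\tau}-e_-$ alone. To surmount this I expect to re-run the estimates behind Theorem~\ref{Th:41} (the adaptation of~\cite{GH2}, Theorem~1, to $n=3$) one differentiation order higher, using the conical support of $e_{h,-i\tau}$ and the mesh condition $\frac{\tau}{h^2}<\frac{1}{6\pi^2}$ to keep the kernel estimates summable. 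Once each of the three block-types is shown to converge in $l_1$, the triangle inequality applied to the finitely many matrix entries of $E_{h,-i\tau}-E_-$ yields the stated convergence, completing the proof.
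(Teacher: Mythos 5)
The paper offers no argument for this corollary at all: it is stated as ``an immediate consequence'' of Theorem \ref{Th:41}, so there is no written proof to match yours against step by step. Your decomposition of $E_{h,-i\tau}-E_-$ into the three block types coming from $E_{h,-i\tau}=e_{h,-i\tau}D_{h,-i\tau}$ and $E_-=e_-D_{x,-it}$ is the natural one, and your observation that the $\gamma^-$ block follows directly from Theorem \ref{Th:41} is correct. More importantly, you have put your finger on exactly why the corollary is \emph{not} immediate: $D_h^{\pm\mp}$ has operator norm of order $1/h$ on $l_1$, so $\|e_{h,-i\tau}-e_-\|_{l_1}\to 0$ by itself gives no control of $\|D_h^{\pm\mp}(e_{h,-i\tau}-e_-)\|_{l_1}$, and near $(x,t)=(0,0)$ the kernel $\Grad e_-$ is genuinely more singular than $e_-$.

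The gap is that you do not close this point: the decisive estimate --- $l_1$-convergence of the first (and, for the $\gamma^+$ block, effectively second) difference quotients of $e_{h,-i\tau}$ to the corresponding derivatives of $e_-$ --- is announced as something you ``expect'' to obtain by re-running the kernel estimates behind Theorem \ref{Th:41} one differentiation order higher, but it is not carried out, and it is precisely the hard part. Note also that your reduction of the time-difference block via $-i\partial_\tau e_{h,-i\tau}=\Delta_h e_{h,-i\tau}$ away from the singularity does not land you back in the first-order spatial case: $\Delta_h$ is a second difference, so that block needs a second-order version of the estimate (consistently, the $\f$-component of $E_-$ in (\ref{Form:1}) carries the more singular factor $\frac{|x|^2}{4t^2}-i\frac{n}{2t}$). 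So your proposal is a correct and candid reduction of the corollary to a quantitative strengthening of Theorem \ref{Th:41} --- in effect, to the difference-quotient convergence results of G\"urlebeck and Hommel that the paper is implicitly leaning on when it calls the corollary immediate --- but as written it is a proof outline whose central estimate is missing, not a complete proof.
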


While we can prove the convergence of the discrete solution $E_{h,
-i\tau}$ to $E_-,$ the proofs do not yield the order of convergence
due to the nature of the continuous fundamental solution of the
Schr\"odinger equation. This will be the subject of future work.

Hence, we can establish the discrete analogues of the Teodorescu
operator.

\begin{theo} For all $u \in l_p(\Omega_{h, \tau}),$ $1<p<+\infty,$ such that $u : \Omega_{h, \tau} \rightarrow \BC^4$ we
have the discrete Teodorescu operator $T_{h,-i\tau}$ satisfying to
\begin{equation} D_{h,-i\tau} T_{h,-i\tau}u (h\ul{m},k\tau) =
u (h\ul{m},k\tau),  \label{inversodireitaD-+}\end{equation}
 where
 \begin{equation}
    T_{h,-i\tau}u  (h\ul{m},k\tau) = \sum_{( h \ul n, s \tau)
    \in ~ \Omega_{h,\tau}} h^{3} \tau
    E_{h,-i\tau}(h\ul{m}-h \ul n, k\tau - s\tau) u( h \ul n ,s \tau), \label{Def:T-+}
\end{equation}
for all $(h\ul{m},k\tau)\in \Omega_{h,\tau} $.
\end{theo}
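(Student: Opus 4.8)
The plan is to read the operator $T_{h,-i\tau}$ from (\ref{Def:T-+}) as a discrete convolution of the fundamental solution $E_{h,-i\tau}$ with $u$, and to reduce the claimed identity to the single statement that $E_{h,-i\tau}$ reproduces the discrete delta under $D_{h,-i\tau}$. Since $\Omega$ is bounded, $\Omega_{h,\tau}=\Omega\cap(\BR^3_h\times\BR^+_\tau)$ contains only finitely many lattice points, so the sum in (\ref{Def:T-+}) is finite and no convergence question arises. The operator $D_{h,-i\tau}$ in (\ref{D-+htau}) is a constant-coefficient difference operator acting in the free variable $(h\ul m,k\tau)$ only; its forward/backward spatial shifts and its forward time shift fall exclusively on the factor $E_{h,-i\tau}(h\ul m-h\ul n,k\tau-s\tau)$ and never on $u(h\ul n,s\tau)$. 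First I would therefore interchange $D_{h,-i\tau}$ with the finite sum to obtain
\[
D_{h,-i\tau}T_{h,-i\tau}u(h\ul m,k\tau)=\sum_{(h\ul n,s\tau)\in\Omega_{h,\tau}}h^3\tau\,\bigl(D_{h,-i\tau}E_{h,-i\tau}\bigr)(h\ul m-h\ul n,k\tau-s\tau)\,u(h\ul n,s\tau).
\]

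Next I would compute $D_{h,-i\tau}E_{h,-i\tau}$. By construction $E_{h,-i\tau}=e_{h,-i\tau}D_{h,-i\tau}$, i.e. $E_{h,-i\tau}$ is exactly the scalar fundamental solution $e_{h,-i\tau}$ acted on by the parabolic Dirac operator, as its explicit matrix form displays (the blocks $D_h^{\mp\pm}e_{h,-i\tau}$, the $\gamma^+$-term $\partial_\tau e_{h,-i\tau}$ and the $\gamma^-$-term $-i\,e_{h,-i\tau}$). Applying $D_{h,-i\tau}$ once more and using the factorization (\ref{factH}), which was obtained precisely from the Witt-type relations (\ref{actionWitt}), gives $D_{h,-i\tau}E_{h,-i\tau}=(D_{h,-i\tau})^2 e_{h,-i\tau}=(-\Delta_h-i\partial_\tau)\,e_{h,-i\tau}\,\e_0$. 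Here I would invoke the discrete fundamental-solution identity (\ref{SchFundSolution}), namely $(-i\partial_\tau-\Delta_h)e_{h,-i\tau}(h\ul m,k\tau)=\delta_\tau(k\tau)\delta_h(h\ul m)$, to conclude that $D_{h,-i\tau}E_{h,-i\tau}(h\ul m,k\tau)=\delta_\tau(k\tau)\,\delta_h(h\ul m)\,\e_0$.

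Substituting this back into the sum, the discrete delta $\delta_h(h\ul m-h\ul n)\delta_\tau(k\tau-s\tau)$ vanishes unless $\ul n=\ul m$ and $s=k$, a node which lies in $\Omega_{h,\tau}$ because $(h\ul m,k\tau)\in\Omega_{h,\tau}$ by hypothesis. Only that single term survives, and using the normalizations $\delta_h({\bf 0})=h^{-3}$ and $\delta_\tau(0)=\tau^{-1}$ the weights cancel, $h^3\tau\cdot h^{-3}\cdot\tau^{-1}=1$, leaving $\e_0\,u(h\ul m,k\tau)=u(h\ul m,k\tau)$, which is (\ref{inversodireitaD-+}). Note that the argument is purely pointwise and algebraic, so it is independent of $p$ once the finiteness of $\Omega_{h,\tau}$ is observed.

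The step needing genuine care is the computation $D_{h,-i\tau}E_{h,-i\tau}=(D_{h,-i\tau})^2e_{h,-i\tau}$: one must verify that applying $D_{h,-i\tau}$ to the matrix form of $E_{h,-i\tau}$ reproduces the full square, so that the cross terms carrying the odd powers of $\gamma^\pm$ cancel and only the scalar operator $(-\Delta_h-i\partial_\tau)\e_0$ remains --- exactly the cancellation already carried out when deriving (\ref{factH}), now read with one of the two factors frozen as the function $e_{h,-i\tau}$. Since this uses only $(\gamma^\pm)^2=0$, $\gamma^+\gamma^-+\gamma^-\gamma^+=\mathrm{id}$ and the anticommutation encoded in (\ref{actionWitt}), together with the already-established (\ref{factH}) and (\ref{SchFundSolution}), the remaining steps are formal and the interchange with the finite sum is immediate.
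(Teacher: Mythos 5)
Your proposal is correct and follows essentially the same route as the paper's own proof: interchange $D_{h,-i\tau}$ with the finite sum, use $E_{h,-i\tau}=e_{h,-i\tau}D_{h,-i\tau}$ together with the factorization (\ref{factH}) and the scalar nature of $e_{h,-i\tau}$ to reduce $D_{h,-i\tau}E_{h,-i\tau}$ to $(-i\partial_\tau-\Delta_h)e_{h,-i\tau}=\delta_\tau\delta_h$, and then let the delta normalizations cancel the weights $h^3\tau$. The only difference is that you spell out the $\gamma^\pm$ cancellation and the finiteness of the sum more explicitly than the paper does.
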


\begin{proof}
We have for $T_{h,-i\tau}$ that
   \begin{gather*}   D_{h,-i\tau} T_{h,-i\tau}u (h\ul{m},k\tau) = \sum_{( h \ul n, s \tau)
    \in ~ \Omega_{h,\tau}} h^{3} \tau
    [ D_{h,-i\tau} E_{h,-i\tau}](h\ul{m}-h \ul n, k\tau - s\tau) u( h \ul n ,s
    \tau).
        \end{gather*}
Since $E_{h,-i\tau} = e_{h,-i\tau}D_{h,-i\tau}$ and $e_{h,-i\tau}$
is a scalar solution, we have
       \begin{gather*}
   D_{h,-i\tau} T_{h,-i\tau}u (h\ul{m},k\tau)   \\
    = \sum_{( h \ul n, s \tau)
    \in ~ \Omega_{h,\tau}} h^{3} \tau
     [ e_{h,-i\tau}(D_{h,-i\tau} )^2 (h\ul{m}-h \ul n, k\tau - s\tau) ] u( h \ul n ,s
    \tau) \\
       = \sum_{( h \ul n, s \tau)
    \in ~ \Omega_{h,\tau}} h^{3} \tau ~[     \delta_h(h\ul{m} - h \ul n)  \delta_{\tau}(k \tau - s \tau)u( h \ul n ,s
    \tau)] \\
    = u( h \ul{m},k\tau).
    \end{gather*}
\end{proof}

Now we are able to present the following norm estimate.
\begin{theo}\label{Th:42}
For all $u \in l_p(\Omega_{h,\tau}),$ $1<p<+\infty,$ such that $u :
\Omega_{h, \tau} \rightarrow \BC^4$ there exists a positive constant
$C>0$ such that
    \begin{eqnarray*}
    ||T_{h,-i\tau}u  ||_{l_p(\Omega_{h,\tau})} & \leq &  C
    ||u||_{l_p(\Omega_{h,\tau})}.
    \end{eqnarray*}
Moreover, $T_{h,-i\tau}$ is a continuous operator.
\end{theo}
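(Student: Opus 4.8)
The plan is to read off from (\ref{Def:T-+}) that $T_{h,-i\tau}$ is nothing but a discrete convolution of the matrix-valued kernel $E_{h,-i\tau}$ with $u$, taken against the weighted counting measure $h^3\tau$ on the lattice $\BR^3_h\times\BR^+_\tau$, and then to estimate it by a discrete Young inequality. First I would extend an arbitrary $u\in l_p(\Omega_{h,\tau})$ by zero to the whole lattice. Since extension by zero leaves the $l_p$-norm unchanged and restriction to $\Omega_{h,\tau}$ can only decrease it, it suffices to bound the full-lattice convolution $(E_{h,-i\tau}*u)(h\ul m,k\tau)=\sum_{(h\ul n,s\tau)}h^3\tau\,E_{h,-i\tau}(h\ul m-h\ul n,k\tau-s\tau)\,u(h\ul n,s\tau)$ in $l_p$.

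Next I would invoke Young's inequality for convolutions on the abelian lattice group with Haar measure $h^3\tau$: for a matrix-valued kernel $K$ and $g\in l_p$ one has $\|K*g\|_{l_p}\leq\big(\sum_{(h\ul n,s\tau)}h^3\tau\,\|K(h\ul n,s\tau)\|\big)\,\|g\|_{l_p}$, where $\|\cdot\|$ is any fixed norm on $\BC^{4\times4}$ (all equivalent, the space being finite-dimensional, so the choice only affects the constant). Applying this with $K=E_{h,-i\tau}$ yields the desired estimate with $C=\sum_{(h\ul n,s\tau)}h^3\tau\,\|E_{h,-i\tau}(h\ul n,s\tau)\|$, i.e.\ $C=\|E_{h,-i\tau}\|_{l_1}$ up to the equivalence constant. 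The whole problem then reduces to showing that this $C$ may be chosen independently of $h$ and $\tau$.

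To obtain such a uniform bound I would use that $\Omega$ is bounded. Consequently the differences $(h\ul m-h\ul n,k\tau-s\tau)$ occurring in the sum range only over a fixed bounded set $G\times[0,T]$ (the factor $H(k\tau)$ in (\ref{fundLapl}) annihilates negative time differences), so it suffices to control $\|E_{h,-i\tau}\|_{l_1(G_h\times[0,T]_\tau)}$. Splitting $\|E_{h,-i\tau}\|_{l_1}\leq\|E_{h,-i\tau}-E_-\|_{l_1}+\|E_-\|_{l_1}$, the first term tends to $0$ by the Corollary to Theorem \ref{Th:41} and hence stays bounded, while for the second term I would argue that the discrete $l_1$-norm of the grid restriction of $E_-$ is a Riemann sum for the finite integral $\int_{G\times(0,T)}\|E_-\|\,dx\,dt$ and therefore remains bounded as $h,\tau\to0$. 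This produces a constant $C$ independent of the mesh sizes; continuity of $T_{h,-i\tau}$ is then immediate, since a linear operator admitting such an estimate is bounded.

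I expect the main obstacle to be the uniform control of $\|E_-\|_{l_1}$, because the continuous fundamental solution $E_-$ in (\ref{Form:1}) is singular as $t\to0^+$: both the Gaussian factor and the $\f$-component $\frac{|x|^2}{4t^2}-i\frac{n}{2t}$ blow up there. Some care is needed to see that the Riemann sums of $\|E_-\|$ do not secretly depend on $h,\tau$ through this singularity; the conical support of $E_{h,-i\tau}$ and the mesh condition $\frac{\tau}{h^2}<\frac{1}{6\pi^2}$ from Theorem \ref{Th:41} are precisely what should keep the comparison with the continuous kernel under control.
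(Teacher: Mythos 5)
Your core estimate is correct and does prove the theorem as stated, but by a different (and in fact cleaner) route than the paper. The paper's own proof simply bounds the kernel pointwise: it sets $C(\ul m,k)=\max_{(h\ul n,s\tau)\in\Omega_{h,\tau}}|E_{h,-i\tau}(h\ul m-h\ul n,k\tau-s\tau)|$ and then $C=\max C(\ul m,k)$, a maximum over the finitely many lattice points of the bounded domain (the displayed manipulation there is actually garbled: the outer sum over $(h\ul m,k\tau)$ that defines the $l_p$-norm of $T_{h,-i\tau}u$ is silently dropped, so what is written is closer to a bound on a single value of $T_{h,-i\tau}u$). You instead extend $u$ by zero and apply discrete Young's inequality, obtaining $C=\|E_{h,-i\tau}\|_{l_1}$ up to a norm-equivalence constant on $\BC^{4\times 4}$. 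Both arguments produce a constant that depends on $h$ and $\tau$, and that is all the statement asks for: it asserts the existence of some $C>0$ for the given grid, and makes no uniformity claim in the mesh parameters. On a finite grid $\|E_{h,-i\tau}\|_{l_1}<\infty$ trivially, so you are done at that point.

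Where you go beyond the paper is in trying to make $C$ independent of $h$ and $\tau$, and there the obstacle you flag at the end is a genuine one, not a technicality. Unlike the heat kernel, the oscillatory factor satisfies $|\exp(i|x|^2/4t)|=1$, so from (\ref{Form:1}) one has $|E_-(x,t)|\sim t^{-n/2}\bigl(|x|/(2t)+|x|^2/(4t^2)+n/(2t)+1\bigr)$ as $t\rightarrow 0^+$, and $\int_0^T\int_G |E_-|\,dx\,dt$ diverges (already $\int_0^T t^{-3/2}\,dt=\infty$ for $e_-$ itself). Hence $\|E_-\|_{l_1(G_h\times[0,T]_\tau)}$ is not a Riemann sum of a finite integral and blows up as $\tau\rightarrow 0$, so no mesh-uniform constant can be extracted by the splitting $\|E_{h,-i\tau}\|_{l_1}\leq\|E_{h,-i\tau}-E_-\|_{l_1}+\|E_-\|_{l_1}$. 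This is exactly why the paper works with the regularized kernels $E_-^\varepsilon$ in the later convergence proofs and admits it cannot give rates. For the statement under review, simply stop after Young's inequality; the uniformity question is a separate (and in this paper unresolved) issue.
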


\begin{proof}
Initially we have
    \begin{gather*}
    ||T_{h,-i\tau}u  ||_{l_p(\Omega_{h,\tau})} = \\ \\
    = \left( \sum_{(h\ul n, s \tau) \in \Omega_{h,\tau}} \tau
    h^3 \left| E_{h,-i\tau}(h\ul{m}-h \ul n, k\tau - s\tau) u( h \ul n ,s \tau)
    \right|^p
    \right)^{\frac{1}{p}} \\ \\
    \leq
    \left( \sum_{(h\ul n, s \tau) \in \Omega_{h,\tau}} \tau
     h^3 \left|E_{h,-i\tau}(h\ul{m}-h \ul n, k\tau  - s\tau)
    \right|^p     \left| u( h \ul n ,s \tau) \right|^p
    \right)^{\frac{1}{p}}.
    \end{gather*}
Let us take $C(\ul{m}, k) = \max_{(h\ul{n}, s \tau) \in \Omega_{h,
\tau}}|E_{h,-i\tau}(h\ul{m} - h\ul{n}, k\tau  - s \tau)|.$ Then
there exists $C = \max C(\ul{m}, k) >0,$ this maximum being taken
over all $(\ul{m}, k)$ such that $(h\ul{m}, k\tau) \in \Omega_{h,
\tau},$ and the result holds.
\end{proof}

As we have done for the analytic case we can establish a
decomposition of the $l_p$-space.

\begin{theo}\label{dd}
For the space $l_p(\Omega_{h,\tau}),$ $1<p<\infty,$ the following
direct decomposition
    \begin{eqnarray*}
   l_p(\Omega_{h,\tau}) & = & \ker D_{h,-i\tau}(\mbox{\rm int}
    \Omega_{h,\tau}) \oplus D_{h,-i\tau}({\stackrel{\circ}{{w}_{p}^1}} (\Omega_{h,\tau}))
    \end{eqnarray*}
is valid, with correspondent discrete projection operators
    \begin{eqnarray*}
   P_{h,\tau} & : & l_p(\Omega_{h,\tau}) \mapsto \ker D_{h,-i\tau}
    (\mbox{\rm int} \Omega_{h,\tau}), \\
    Q_{h,\tau} & : & l_p(\Omega_{h,\tau}) \mapsto D_{h,-i\tau}
    ({\stackrel{\circ}{{w}_{p}^1}}(\Omega_{h,\tau})),
    \end{eqnarray*} where
    ${\stackrel{\circ}{{w}_{p}^1}}(\Omega_{h,\tau})$ denotes the
    discrete counterpart of the Sobolev space ${\stackrel{\circ}{{W}_{p}^1}}(\Omega).$
\end{theo}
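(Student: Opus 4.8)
The plan is to transcribe the continuous $L_p$-decomposition of \cite{CV} into the discrete setting, using the discrete Teodorescu operator $T_{h,-i\tau}$ as the replacement for the analytic right inverse and the factorization $(D_{h,-i\tau})^2 = -\Delta_h - i\partial_{\tau}$ established in (\ref{factH}). A preliminary simplification is worth recording: since $\Omega$ is a bounded domain with time projection $(0,T)$, the lattice $\Omega_{h,\tau}=\Omega\cap(\BR^3_h\times\BR^+_\tau)$ is \emph{finite} for each fixed $(h,\tau)$, so $l_p(\Omega_{h,\tau})$ is finite-dimensional. Consequently the two candidate summands $\ker D_{h,-i\tau}(\mathrm{int}\,\Omega_{h,\tau})$ and $D_{h,-i\tau}(\stackrel{\circ}{{w}_{p}^1}(\Omega_{h,\tau}))$ are automatically closed and the eventual projectors $P_{h,\tau},Q_{h,\tau}$ are automatically continuous; the only real content is the \emph{algebraic} direct-sum statement, together with the uniqueness that makes the projectors well defined.

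\emph{Existence.} Given $f\in l_p(\Omega_{h,\tau})$, I would first solve the discrete Dirichlet problem for the factorized operator: find $\phi\in\stackrel{\circ}{{w}_{p}^1}(\Omega_{h,\tau})$ with $(D_{h,-i\tau})^2\phi = D_{h,-i\tau}f$ on $\mathrm{int}\,\Omega_{h,\tau}$ and $\phi=0$ on $\partial\Omega_{h,\tau}$. By (\ref{factH}) this is precisely the discrete backward Schr\"odinger operator $-\Delta_h-i\partial_\tau$, so the relation $-i(\phi(k+1)-\phi(k))/\tau = D_{h,-i\tau}f + \Delta_h\phi(k)$ is an explicit forward time-marching recursion; with zero initial and spatial-boundary data it is uniquely solvable step by step in $k$. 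Setting $g:=f-D_{h,-i\tau}\phi$, the factorization yields $D_{h,-i\tau}g = D_{h,-i\tau}f - (D_{h,-i\tau})^2\phi = 0$ at every interior node, so $g\in\ker D_{h,-i\tau}(\mathrm{int}\,\Omega_{h,\tau})$ while $D_{h,-i\tau}\phi\in D_{h,-i\tau}(\stackrel{\circ}{{w}_{p}^1}(\Omega_{h,\tau}))$ by construction, giving $f=g+D_{h,-i\tau}\phi$.

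\emph{Directness and projectors.} It remains to check $\ker D_{h,-i\tau}(\mathrm{int})\cap D_{h,-i\tau}(\stackrel{\circ}{{w}_{p}^1})=\{0\}$. If $g=D_{h,-i\tau}\psi$ with $\psi\in\stackrel{\circ}{{w}_{p}^1}$ and $D_{h,-i\tau}g=0$ on the interior, then $(D_{h,-i\tau})^2\psi=0$ with $\psi=0$ on $\partial\Omega_{h,\tau}$; the uniqueness of the same homogeneous marching problem forces $\psi\equiv0$, hence $g=0$. This proves the sum is direct, so each $f$ has a unique decomposition and I may define $P_{h,\tau}f:=g$ and $Q_{h,\tau}f:=D_{h,-i\tau}\phi=f-P_{h,\tau}f$, which are linear; their mapping properties onto the two subspaces are exactly the required ones, and their continuity is immediate in the finite-dimensional setting.

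\emph{Main obstacle.} For the bare statement at fixed mesh the difficulty is thus entirely formal, resting only on the right-inverse property $D_{h,-i\tau}T_{h,-i\tau}=I$ and the unique solvability of the explicit discrete parabolic Dirichlet problem. The genuinely content-laden issue — which I expect to be the true obstacle, and which is what makes the qualification $1<p<\infty$ and the parallel with the continuous $p=2$ case of \cite{CV} meaningful — is obtaining the estimate \emph{uniform} in $(h,\tau)$, i.e. the closed-range bound $\|D_{h,-i\tau}\phi\|_{l_p}\ge c\,\|\phi\|_{\stackrel{\circ}{{w}_{p}^1}}$, a discrete Poincar\'e-type inequality that controls the solution operator of the discrete Dirichlet problem independently of the grid. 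Such a uniform bound, combined with the norm estimate for $T_{h,-i\tau}$ in Theorem \ref{Th:42}, is what would make $P_{h,\tau},Q_{h,\tau}$ bounded with mesh-independent constants and feed into the convergence analysis of the scheme; establishing it for general $p$ is the part requiring real work, whereas the decomposition itself follows the continuous template step for step.
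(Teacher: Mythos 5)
The paper offers no proof of this theorem at all: it is asserted by analogy with the continuous decomposition of \cite{CV} and with the discrete elliptic decomposition in \cite{GS} (``As we have done for the analytic case\dots''), so there is no argument of the authors' to compare yours against line by line. Your reconstruction is the natural one and is essentially the template those references use: reduce everything to unique solvability of the discrete Dirichlet problem for $(D_{h,-i\tau})^2=-i\partial_\tau-\Delta_h$ (via the factorization (\ref{factH})), obtain existence of the splitting by solving $(D_{h,-i\tau})^2\phi=D_{h,-i\tau}f$ with $\phi\in\stackrel{\circ}{{w}_{p}^1}(\Omega_{h,\tau})$ and setting $g=f-D_{h,-i\tau}\phi$, and obtain directness from uniqueness of the homogeneous problem. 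Your preliminary remark that $\Omega_{h,\tau}$ is finite at fixed $(h,\tau)$, so that closedness of the summands and continuity of $P_{h,\tau},Q_{h,\tau}$ are automatic and only the algebraic direct-sum statement carries content, is correct and is worth making explicit, since it is precisely what makes the ``for all $1<p<\infty$'' painless here.

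Two caveats. First, your explicit forward time-marching argument tacitly assumes that the zero boundary condition defining $\stackrel{\circ}{{w}_{p}^1}(\Omega_{h,\tau})$ is imposed only on the parabolic boundary (initial time slice plus lateral spatial boundary); if it is read as vanishing on all of $\partial\Omega_{h,\tau}$ including the final time slice, the marching recursion determines $\phi$ there from the interior equations and the problem is over-determined, so both your existence and your uniqueness steps would need re-examination. This ambiguity is inherited from the paper, which never specifies the discrete boundary, but a complete proof must resolve it. Second, you are right that the fixed-mesh statement is essentially formal and that the genuinely hard point is a bound on $Q_{h,\tau}$ that is \emph{uniform} in $(h,\tau)$ (a discrete Poincar\'e-type inequality); note that the paper does in fact rely on such boundedness later --- the convergence proof for $Q_{h,\tau}$ invokes ``$Q_{h,\tau}$ has bounded norm'' --- and neither the paper nor your proposal establishes it with mesh-independent constants. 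Flagging that gap, rather than silently absorbing it, is a point in your favour.
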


\subsection{Convergence of the discrete operators}

We say that $u \in C^{1,\alpha}(\Omega)$ if its first derivatives
are $\alpha$-H\"older continuous.

\begin{theo}
Let $u \in C^{1,\alpha}(\Omega).$ Then it holds $T_{h,-i\tau} u
\rightarrow T u$ as $h, \tau $ tend to zero.
\end{theo}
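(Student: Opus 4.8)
The plan is to insert the restriction of the continuous kernel $E_-$ to the grid as an intermediate object, splitting the total error into a \emph{kernel--replacement} part and a \emph{quadrature} part. Fixing an evaluation node $(h\ul m,k\tau)\in\Omega_{h,\tau}$, I would write
\begin{gather*}
T_{h,-i\tau}u(h\ul m,k\tau) - Tu(h\ul m,k\tau) = (\mathrm{I}) + (\mathrm{II}),\\
(\mathrm{I}) = \sum_{(h\ul n,s\tau)\in\Omega_{h,\tau}} h^3\tau\,\bigl(E_{h,-i\tau}-E_-\bigr)(h\ul m - h\ul n,k\tau - s\tau)\,u(h\ul n,s\tau),\\
(\mathrm{II}) = \sum_{(h\ul n,s\tau)\in\Omega_{h,\tau}} h^3\tau\,E_-(h\ul m - h\ul n,k\tau - s\tau)\,u(h\ul n,s\tau) - \int_\Omega E_-(h\ul m - x,k\tau - t)\,u(x,t)\,dx\,dt.
\end{gather*}
Since $\Omega$ is bounded, every spatial displacement $h\ul m - h\ul n$ stays in a fixed bounded set $G\subset\BR^3$ and the time displacements stay in a bounded interval; the estimate is established at a fixed node and the resulting bounds are uniform over such nodes, which yields the claimed convergence.

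First I would dispose of $(\mathrm{I})$. Because $u\in C^{1,\alpha}(\Omega)$ on a bounded domain, $u$ is bounded, so $\|u\|_{\infty}<\infty$ and
$$|(\mathrm{I})| \;\le\; \|u\|_{\infty}\sum_{(h\ul n,s\tau)} h^3\tau\,\bigl|E_{h,-i\tau}-E_-\bigr|(h\ul m - h\ul n,k\tau - s\tau) \;\le\; \|u\|_{\infty}\,\bigl\|E_{h,-i\tau}-E_-\bigr\|_{l_1(G_h\times[0,+\infty)_\tau)}.$$
By the Corollary to Theorem \ref{Th:41} the right-hand side tends to $0$ as $h,\tau\to0$, so $(\mathrm{I})\to0$.

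The term $(\mathrm{II})$ is a pure quadrature error for the \emph{fixed} continuous kernel $E_-$. Writing the integral as a sum of integrals over the grid cells $Q_{\ul n,s}$ of volume $h^3\tau$ anchored at $(h\ul n,s\tau)$, I would rewrite $(\mathrm{II})$ as $\sum_{\ul n,s}\int_{Q_{\ul n,s}}\bigl[E_-(h\ul m - h\ul n,\cdots)u(h\ul n,s\tau)-E_-(h\ul m - x,\cdots)u(x,t)\bigr]\,dx\,dt$ and then add and subtract a mixed term so as to separate the oscillation of $u$ from that of the kernel. The oscillation of $u$ on a cell is $O(h+\tau)$ by the Lipschitz bound coming from $u\in C^{1,\alpha}$; the oscillation of $E_-$ is controlled by the mean value theorem wherever $E_-$ is smooth. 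To handle the singularity I would isolate a small parabolic cylinder $B_\rho$ about the diagonal point $(x,t)=(h\ul m,k\tau)$: on $\Omega\setminus B_\rho$ the kernel is smooth, so the two mechanisms above give a quadrature error $O(h+\tau)$ with a constant depending on $\rho$, vanishing as $h,\tau\to0$ for fixed $\rho$; on $B_\rho$ one bounds both the integral contribution and the matching part of the sum by the (weak, parabolic) integrability modulus of $E_-$, uniformly in $h,\tau$, and then lets $\rho\to0$ after $h,\tau\to0$.

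The hard part is exactly this last estimate near the diagonal. The vector- and $\f$-components of $E_-$ grow like $|x|\,t^{-1}$ and $|x|^2 t^{-2}$ against the oscillatory Gaussian $\exp(i|x|^2/4t)$, so $E_-$ is only conditionally (oscillatorily) integrable; this is the very obstruction the authors flag. Controlling the singular cylinder's contribution \emph{uniformly in the mesh} --- not merely for the fixed continuous kernel --- is where one must use the parabolic scaling together with the mesh condition $\tau/h^2<\tfrac{1}{6\pi^2}$, which guarantees that the conical support and growth of $e_{h,-i\tau}$ match those of $e_-$, so that the discrete and continuous singular contributions agree in the limit.
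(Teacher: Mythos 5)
Your decomposition into a kernel-replacement term and a quadrature term, with the quadrature error further split into the oscillation of $u$ over a cell (controlled by the H\"older continuity of $u$) and the oscillation of the kernel (controlled by a gradient bound), is exactly the skeleton of the paper's proof, and your treatment of the term $(\mathrm{I})$ via the Corollary to Theorem \ref{Th:41} is legitimate. The gap is where you yourself locate it: the near-diagonal estimate. It is not merely ``hard'' as you concede --- as written it cannot be carried out, because $E_-$ is not absolutely integrable near the diagonal. Indeed $|e_-(x,t)| = (4\pi t)^{-3/2}$ independently of the oscillatory phase, so the absolute integral of $e_-$ over any cylinder $B_\rho$ touching $t=0$ diverges, and the components of $E_-$ are worse by factors of order $|x|/t$ and $|x|^2/t^2$. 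Consequently there is no ``weak, parabolic integrability modulus'' to invoke: the contribution of $B_\rho$ to the integral is not small in absolute value for small $\rho$, the matching Riemann sums are not uniformly controlled either, and the mesh condition $\tau/h^2 < 1/(6\pi^2)$ enters only in the kernel convergence of Theorem \ref{Th:41}; it does not repair an estimate that requires exploiting the cancellation of $\exp(i|x|^2/(4t))$, which a crude absolute-value cutoff discards.

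The paper sidesteps this by regularizing first: it replaces $E_-$ by $E_-^\varepsilon = e^{-\varepsilon|x|^2/(4t)}E_-$ and $E_{h,-i\tau}$ by $E^{\varepsilon}_{h,-i\tau} = e^{-\varepsilon|h\ul m|^2/(4k\tau)}E_{h,-i\tau}$, proves the convergence $T^{\varepsilon}_{h,-i\tau}u \rightarrow T^{\varepsilon}u$ for fixed $\varepsilon>0$ --- where the H\"older inequality, the $\alpha$-H\"older oscillation bound for $u$, and the Taylor expansion of the kernel that you want to use are all applied to a tamed kernel --- and only at the end lets $\varepsilon \rightarrow 0$, using that $E_-^{\varepsilon}$ converges to $E_-$ in the sense of tempered distributions. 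If you insert this regularization, your argument goes through essentially as the paper's and the $\rho$-cutoff becomes unnecessary; without it, or without a genuine oscillatory-integral argument in its place, the proof is incomplete at its central point.
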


\begin{proof} In order to prove the above result we introduce the
regularized Teodorescu operator (see \cite{Tao})
$$T^\varepsilon u (x, t) = \int_\Omega E_-^\varepsilon (x-z,t-r) u(z,r)dz dr,$$
where
$$E_-^\varepsilon (x,t) = e^{-\epsilon \frac{|x|^2}{4t}}E_-(x,t)$$
stands for a regularization of the fundamental (continuous) solution
$E_-$ and, therefore, it converges in the sense of tempered
distributions to $E_-$ as $\varepsilon \rightarrow 0.$ In a similar
way, we construct the regularized discrete operator
$T_{h,-i\tau}^{\varepsilon}$ in terms of the discrete analogue of the regularized
fundamental solution $$E^{\varepsilon}_{h,-i\tau} = e^{ -\epsilon
\frac{|h \underline m|^2}{4k \tau} }  E_{h,-i\tau}.$$

By definition, we have
\begin{gather}
 |T_{h,-i\tau}^{\varepsilon} u (h \underline m, k \tau) - T^\varepsilon u (h \underline m, k
 \tau)|  \nonumber \\
 \leq     \left| \sum_{(h \ul n,s \tau) \in \Omega_{h, \tau}} E^{\varepsilon}_{h,-i\tau}(h \ul m-h \ul n, k \tau-s \tau) u(h \ul n, s \tau) h^3 \tau
  \right. \nonumber \\
  -  \left.     \int_{\Omega} E_-^\varepsilon(h \underline m-z,k \tau-r) u(z,r)
  dz    dr     \right|.  \label{gather1}
     \end{gather}

Due to the singularity of the continuous fundamental solution $E_-^\varepsilon$,
we will split the continuous domain $\Omega$ into parallelepiped
$W(h \ul n, s \tau)$ centered at the points $(h \ul n, s \tau )$ of
the lattice $\Omega_{h, \tau}$ with side-lengths $h$ and $\tau$,
respectively. Furthermore, let $p,q \in \BN$ be such that
$\frac{1}{p}+\frac{1}{q}=1.$ We have then
\begin{gather}
(\ref{gather1}) \leq  \nonumber \\
\sum_{(h \ul n,s \tau) \in \Omega_{h, \tau}}
 \left| [ E^{\varepsilon}_{h,-i\tau}(h \ul m-h \ul n, k \tau-s \tau) - E_-^\varepsilon(h \ul m-h \ul n, k \tau-s \tau)] u(h \ul n, s \tau)
h^3 \tau   \right| \nonumber \\
+ \left|  \sum_{(h \ul n,s \tau) \in \Omega_{h, \tau}}   \left[
E_-^\varepsilon(h \ul m-h \ul n, k \tau-s \tau) u(h \ul n, s \tau)
h^3 \tau
\right. \right. \nonumber \\
- \left.   \int_{W(h \ul n, s \tau)} E_-^\varepsilon(h \underline
m-z,k \tau-r) u(z,r) dz dr    ~]  \right|. \label{FirstEstimate}
     \end{gather} We use H\"older's inequality on the first
     term and by a convenient adding up we get
\begin{gather*}
  (\ref{FirstEstimate})  \leq  ||E^{\varepsilon}_{h,-i\tau}  -  E_-^\varepsilon ||_{l_p(\Omega_{h,\tau}) } ||u||_{l_q(\Omega_{h,\tau}) } \\
       + \underbrace{ \sum_{(h \ul n,s \tau) \in \Omega_{h, \tau}, ~z \in W(h \ul n,s \tau)}
 \left[ \left| ~ E_-^\varepsilon(h \ul m-h \ul n, k \tau-s \tau) [ u(h \ul n, s \tau) -
 u(z, r) ] h^3 \tau \right| \right. }_{(I_1)}  \nonumber \\
+   \underbrace{ \int_{W(h \ul n, s \tau)} \left|
\left[E_-^\varepsilon(h \ul m-h \ul n, k \tau-s \tau) -
E_-^\varepsilon(h \underline m-z,k \tau-r) \right] u(z,r) \right| dz
dr }_{(I_2(h \ul n, s \tau))}  ~] .
    \end{gather*}

For the term $(I_1)$ we obtain
\begin{gather*}
  (I_1)  \leq \sum_{(h \ul n,s \tau) \in \Omega_{h, \tau}, ~z \in W(h \ul n,s
  \tau)} | E_-^\varepsilon(h \ul m-h \ul n, k \tau-s \tau)| \\
\times  \int_{W(h \ul n,s \tau)} |u(h \ul n , s \tau) - u(z,r)| dz dr \\
 \leq \sum_{(h \ul n,s \tau) \in \Omega_{h, \tau}, ~z \in W(h \ul n,s
  \tau)} | E_-^\varepsilon(h \ul m-h \ul n, k \tau-s \tau)| \\
  \times  C \int_{W(h \ul n,s \tau)} |(h \ul n - z,s \tau-r)|^\alpha dz dr,
    \end{gather*}
which goes to zero as $h, \tau \rightarrow 0.$

Finally the term $(I_2(h \ul n, s \tau))$ can be estimate using its
Taylor series expansion and H\"older's inequality
    \begin{gather*}
  (I_2(h \ul n, s \tau))  \leq  \int_{W(h \ul n, s \tau)} \left|
\left[ E_-^\varepsilon(h \ul m-h \ul n, k \tau-s \tau) -
E_-^\varepsilon(h \underline m-z,k \tau-r) \right] u(z,r) \right| dz
dr \\
\leq   \int_{W(h \ul n, s \tau)} \left| \nabla E_-^\varepsilon(h \ul
m- z, k \tau- r) \cdot (h \underline n- z,s  \tau- r )
\right| |u(z,r)| dz dr \\
\leq || \nabla E_-^\varepsilon(h \ul m- \cdot, k \tau- \cdot) \cdot
(h \underline n-\cdot,s  \tau-\cdot )||_{L_q(W(h \ul n, s \tau))}
||u||_{L_p(W(h \ul n, s \tau))},
    \end{gather*} and  again we have that
    $\sum_{(h \ul n,s \tau) \in \Omega_{h, \tau}, ~z \in W(h \ul n,s
  \tau)} (I_2(h \ul n, s \tau)) $ goes to zero as $h, \tau \rightarrow 0.$

Hence, by $\varepsilon \rightarrow 0$ we obtain convergence of the
discrete Teodorescu operator $T_{h, -i\tau}$ to the continuous one.
\end{proof}

Moreover, we notice that we have convergence in $l_p,1<p<\infty,$ of
the regularized discrete Teodorescu operator $T_{h,
-i\tau}^\epsilon$ to the regularized continuous operator
$T^\epsilon.$


We now prove the convergence of the discrete Cauchy-Bitsadze
operator $F_{h,-i\tau} = I - T_{h,-i\tau} D_{h,-i\tau}.$
 Moreover, in what follows we will consider the sub-domains
$    \Omega^t = \left\{ x \in \BR^3: (x,t) \in \Omega \right\}$ and
$\Omega^x = \left\{ t \in \BR^+: (x,t) \in \Omega
    \right\}.$

\begin{theo}\label{fh}
If $u\in \ker D_{x,-it}$ is such that $u  \in C^{1,\alpha}(\Omega)$
for some $0<\alpha< 1$  then  we have
    \begin{eqnarray*}
    ||u- F_{h,-i\tau}u||_{l_p(\Omega_{h,\tau})} & \leq & C ||u||_{C^{1,\alpha}(\Omega)}
    (h^\alpha +\tau^\alpha),
    \end{eqnarray*} for a positive constant $C>0.$
\end{theo}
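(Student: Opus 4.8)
The claim bounds the error $\|u - F_{h,-i\tau}u\|_{l_p}$ when $u$ lies in $\ker D_{x,-it}$ and is $C^{1,\alpha}$. Since $F_{h,-i\tau} = I - T_{h,-i\tau}D_{h,-i\tau}$ by definition, I would first rewrite the quantity being estimated as
\begin{eqnarray*}
u - F_{h,-i\tau}u &=& T_{h,-i\tau}D_{h,-i\tau}u.
\end{eqnarray*}
The key structural observation is that $u$ is annihilated by the \emph{continuous} operator $D_{x,-it}$, i.e.\ $D_{x,-it}u=0$, whereas $D_{h,-i\tau}u$ need not vanish — it measures exactly how far the discrete operator departs from the continuous one on the function $u$. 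So the plan is to show $D_{h,-i\tau}u = (D_{h,-i\tau} - D_{x,-it})u$ is of size $O(h^\alpha + \tau^\alpha)$, and then to invoke the boundedness of $T_{h,-i\tau}$ from Theorem~\ref{Th:42} to transfer that bound through.

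Concretely, I would carry out three steps. First, estimate the consistency error $\|D_{h,-i\tau}u\|_{l_p(\Omega_{h,\tau})}$. The operator $D_{h,-i\tau}$ is built from the forward/backward spatial difference quotients $\partial_h^{\pm s}$ and the forward time quotient $\partial_\tau$, which approximate $D$ and $\partial_t$ respectively; since $D_{x,-it}u=0$, each difference quotient applied to $u$ equals the corresponding partial derivative plus a remainder. For a $C^{1,\alpha}$ function the standard finite-difference remainder from a first-order forward/backward quotient is controlled by the $\alpha$-H\"older modulus of the first derivatives, giving a pointwise bound of the form $C\|u\|_{C^{1,\alpha}(\Omega)}(h^\alpha + \tau^\alpha)$; taking the $l_p$-norm over the (bounded) lattice $\Omega_{h,\tau}$ preserves this, since the number of lattice points times the weight $h^3\tau$ stays bounded by $|\Omega|$. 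Second, apply Theorem~\ref{Th:42}: there is $C>0$ with $\|T_{h,-i\tau}w\|_{l_p} \le C\|w\|_{l_p}$ for all $w$, so
\begin{eqnarray*}
\|u - F_{h,-i\tau}u\|_{l_p(\Omega_{h,\tau})} &=& \|T_{h,-i\tau}D_{h,-i\tau}u\|_{l_p(\Omega_{h,\tau})} \\
&\leq& C\,\|D_{h,-i\tau}u\|_{l_p(\Omega_{h,\tau})}.
\end{eqnarray*}
Third, combine the two bounds to obtain the stated estimate with a single renamed constant $C$.

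The main obstacle is the first step, the consistency estimate near the interface where difference stencils reach outside $\Omega$ (or across $\partial\Omega$): the Taylor/H\"older remainder argument is clean in the interior, but for lattice points within one stencil-width of $\Gamma$ the forward and backward quotients sample points whose relation to $u$ must be handled carefully, and one must verify that the extra boundary layer contributes only $O(h^\alpha+\tau^\alpha)$ to the $l_p$-norm rather than spoiling the rate. A secondary technical point is checking that the mixed Witt-basis structure of $D_{h,-i\tau}$ in \eqref{D-+htau} — the $\gamma^\pm$ blocks pairing $\partial_\tau$ and $\pm i$ — does not introduce cross terms that escape the componentwise H\"older estimate; because $\gamma^\pm$ act as bounded matrix multipliers, I expect each block to be estimable separately and then summed, but this bookkeeping is where I would spend the most care.
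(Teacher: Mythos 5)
Your proposal follows the same route as the paper: rewrite $u - F_{h,-i\tau}u = T_{h,-i\tau}D_{h,-i\tau}u$, use $D_{x,-it}u=0$ to turn this into the consistency error $T_{h,-i\tau}(D_{h,-i\tau}-D_{x,-it})u$, bound that error componentwise by $C\|u\|_{C^{1,\alpha}(\Omega)}(h^\alpha+\tau^\alpha)$ via the H\"older modulus of the first derivatives, and close with the $l_p$-boundedness of $T_{h,-i\tau}$ from Theorem \ref{Th:42}. The only cosmetic difference is that the paper imports the pointwise consistency estimates (\ref{rel1})--(\ref{rel2}) from the literature rather than rederiving them, so your plan is correct and essentially identical.
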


\begin{proof}
We use the definition of $F_{h,-i\tau},$ Theorem \ref{Th:42} and the
fact that $u \in \ker D_{x,-it}.$ We get then
    \begin{eqnarray}
    ||u-F_{h,-i\tau}u||_{l_p(\Omega_{h,\tau})} & = & ||T_{h,-i\tau}D_{h,-i\tau}u ||_{l_p(\Omega_{h,\tau})} \nonumber \\
    & = & ||T_{h,-i\tau}(D_{h,-i\tau} u - D_{x,-it}u)||_{l_p(\Omega_{h,\tau})} \nonumber \\
    & \leq & C_1 ||D_{h,-i\tau} u - D_{x,-it} u
    ||_{l_p(\Omega_{h,\tau})} \nonumber \\
& \leq & C_1  \left( ||D_{h} u  - D_{x } u ||_{l_p(\Omega_{h,
\tau})} + ||\partial_\tau u  - \partial_t u ||_{l_p(\Omega_{h,
\tau})} \right)
\nonumber \\
& \leq & C_1 \left[ \left( \sum_{(h \ul m, k \tau) \in \Omega_{h,
\tau} } |D_h u (h \ul m, k \tau) - D_x u(h \ul m, k \tau) |^p
h^3 \tau \right)^{\frac{1}{p}}  \right. \nonumber \\
& & \left.    +  \left( \sum_{(h \ul m, k \tau) \in \Omega_{h,
\tau}} |\partial_\tau u(h \ul m, k \tau) -
\partial_t u (h \ul m, k \tau) |^p h^3
\tau \right)^{\frac{1}{p}}  \right].  \label{aux2}
    \end{eqnarray}

Additionally, we remark that $ u  \in C^{1,\alpha}(\Omega)$ implies
both
$$u(\cdot, t)  \in C^{1,\alpha}(\Omega^t),~~~u(x,\cdot) \in C^{1,\alpha}(\Omega^x).$$
Moreover, we have (c.f. \cite{GS}, p.268) that
\begin{equation}
|D_{h}^{+-} u (h \ul m, k\tau)- D_{x } u (h \ul m, k\tau) | \leq K(k
\tau) ||u(\cdot, k \tau) ||_{C^{1,\alpha}(\Omega^{k \tau})}
h^\alpha, \label{rel1}
\end{equation} a similar result holding for $D_{h}^{-+},$ and
\begin{equation}
|\partial_\tau  (h \ul m, k\tau)- \partial_t  u (h \ul m, k\tau) |
\leq K(h \ul m) ||u(h \ul m, \cdot) ||_{C^{1,\alpha}(\Omega^{h \ul
m})} \tau^\alpha, \label{rel2}
\end{equation} for some positive constants $K(k \tau), K(h \ul m).$
 Using these two inequalities we have
    \begin{gather*}
(\ref{aux2}) \leq C_1 \left[ \left( \sum_{(h \ul m, k \tau) \in
\Omega_{h, \tau} } K^p(k \tau)  ||u(\cdot, k \tau) ||_{C^{1,\alpha}(\Omega^{k \tau})}^p h^{p \alpha} h^3 \tau \right)^{1/p}  \right.   \\
\left.    +  \left( \sum_{(h \ul m, k \tau) \in \Omega_{h, \tau} }
 K^p(h \ul m)  ||u(h \ul m, \cdot) ||_{C^{1,\alpha}(\Omega^{h \ul
 m})}^p \tau^{p \alpha}  h^3 \tau \right)^{1/p}  \right].
    \end{gather*}

We now take $K = \max_{\Omega_{h,\tau}} \{ K(k \tau), K(h \ul m) \}
>0$ and we recall that $$ ||u(h \ul m, \cdot)
||_{C^{1,\alpha}(\Omega^{h \ul m})}    \leq
||u||_{C^{1,\alpha}(\Omega)} ,~~||u(\cdot, k \tau)
||_{C^{1,\alpha}(\Omega^{k \tau})} \leq
||u||_{C^{1,\alpha}(\Omega)}. $$

Hence
    \begin{gather*}
(\ref{aux2}) \leq C_1 K Vol(\Omega_{h,\tau})
 ||u||_{C^{1,\alpha}(\Omega)} (h^\alpha + \tau^\alpha) .
    \end{gather*}

\end{proof}

We are now in conditions to prove the convergence of the discrete
projection operator $Q_{h,\tau}$ to its continuous counterpart
(\ref{cont_Q}).

\begin{theo}
Let $u  \in L_p(\Omega)$ for some $ 1< p < \infty.$  Then it holds
for the projector $Q_{h,\tau}$
    \begin{eqnarray*}
    ||Q_{h,\tau} u - Q u||_{l_p(\Omega_{h,\tau})}  \rightarrow 0 & as &
    h, \tau ~\rightarrow 0
    \end{eqnarray*} for a positive constant $C.$
\end{theo}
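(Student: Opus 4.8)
The plan is to reduce the statement to the convergence facts already established for the Teodorescu and Cauchy-Bitsadze operators and to the decomposition of Theorem~\ref{dd}. Recall that the definition $F_{h,-i\tau}=I-T_{h,-i\tau}D_{h,-i\tau}$ together with the right-inverse property $D_{h,-i\tau}T_{h,-i\tau}=I$ and the directness of the splitting in Theorem~\ref{dd} identify the discrete kernel projector with the discrete Cauchy-Bitsadze operator, $P_{h,\tau}=F_{h,-i\tau}$, whence $Q_{h,\tau}=I-F_{h,-i\tau}$; the analogous relations $Q=I-P$ hold in the continuous setting. First I would treat $u$ in the dense class $C^{1,\alpha}(\Omega)$ and split it along the continuous decomposition, $u=Pu+Qu$ with $Pu\in\ker D_{x,-it}$ and $Qu=D_{x,-it}v$ for some $v\in\stackrel{\circ}{W_p^{1}}(\Omega)$, so that
\begin{equation*}
Q_{h,\tau}u-Qu=Q_{h,\tau}(Pu)+\bigl(Q_{h,\tau}(Qu)-Qu\bigr).
\end{equation*}

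For the kernel part I would write $Q_{h,\tau}(Pu)=Pu-P_{h,\tau}(Pu)=Pu-F_{h,-i\tau}(Pu)$ and invoke Theorem~\ref{fh} directly: since $Pu\in\ker D_{x,-it}$ and (assuming, on the dense class, that the summands of the splitting inherit $C^{1,\alpha}$-regularity) $Pu\in C^{1,\alpha}(\Omega)$, we get $\|Q_{h,\tau}(Pu)\|_{l_p(\Omega_{h,\tau})}\le C\|Pu\|_{C^{1,\alpha}(\Omega)}(h^\alpha+\tau^\alpha)\to0$. This route is clean precisely because it never invokes a bound on $T_{h,-i\tau}$ itself.

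For the range part I would exploit that $Q_{h,\tau}$ acts as the identity on the discrete range $D_{h,-i\tau}(\stackrel{\circ}{w_p^1}(\Omega_{h,\tau}))$. Writing $Qu=D_{x,-it}v=D_{h,-i\tau}v+(D_{x,-it}v-D_{h,-i\tau}v)$ and using that $v\in\stackrel{\circ}{w_p^1}(\Omega_{h,\tau})$ gives $Q_{h,\tau}(D_{h,-i\tau}v)=D_{h,-i\tau}v$, so that
\begin{equation*}
Q_{h,\tau}(Qu)-Qu=(D_{h,-i\tau}v-D_{x,-it}v)+Q_{h,\tau}\bigl(D_{x,-it}v-D_{h,-i\tau}v\bigr).
\end{equation*}
The first summand tends to zero in $l_p$ because the difference operators $D_h^{\pm\mp}$ and $\partial_\tau$ approximate $D_x$ and $\partial_t$ with rate $h^\alpha+\tau^\alpha$ (the estimates (\ref{rel1})--(\ref{rel2}) used in Theorem~\ref{fh}), provided $v$ is regular enough; the second summand is dominated by $\|Q_{h,\tau}\|\,\|D_{x,-it}v-D_{h,-i\tau}v\|_{l_p}$.

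Finally I would pass from smooth $u$ to arbitrary $u\in L_p(\Omega)$ by density of $C^{1,\alpha}(\Omega)$ in $L_p(\Omega)$ via an $\varepsilon/3$ argument, using the uniform boundedness of $Q$ and of $Q_{h,\tau}$. The hard part is exactly this last ingredient, namely controlling $\|Q_{h,\tau}\|_{l_p\to l_p}$ by a constant independent of $h$ and $\tau$ (it is also what closes the second summand above). Theorem~\ref{Th:42} only provides a bound whose constant, the grid-maximum of $|E_{h,-i\tau}|$, degenerates as $h,\tau\to0$ owing to the singularity of $E_-$, so uniform boundedness cannot be read off from it; establishing it amounts to an $l_p$-bound for the discrete projection/singular-integral operators that is uniform in the mesh, i.e. a discrete Calder\'on-Zygmund estimate. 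This, rather than the two convergence computations above, is where the main effort lies.
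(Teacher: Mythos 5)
Your argument is essentially the paper's own proof: the same splitting $Q_{h,\tau}u-Qu=Q_{h,\tau}(Pu)+(Q_{h,\tau}-I)Qu$, the same reduction of the kernel part to Theorem~\ref{fh} applied to $Pu\in\ker D_{x,-it}$, and the same treatment of the range part via $Qu=D_{x,-it}g$ and $Q_{h,\tau}D_{h,-i\tau}g=D_{h,-i\tau}g$, so the core is correct and matches. The two issues you flag --- the $h,\tau$-uniform bound on $\|Q_{h,\tau}\|_{l_p\to l_p}$ and the regularity of $Pu$ --- are exactly the points the paper asserts without proof (``taking in account that $Q_{h,\tau}$ has bounded norm'' and ``$\|Pu\|_{C^{1,\alpha}(\Omega)}<\infty$''), so your more cautious accounting identifies a real gap in the published argument rather than a defect in your own.
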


\begin{proof}
We start from the equality
    \begin{eqnarray*}
    Q_{h,\tau} u - Q u & = & Q_{h,\tau} ( P u + Q u) - Q ( P u
    +
    Q u) \\
& = & Q_{h,\tau} P u + Q_{h,\tau}  Q u  - Q u
    \end{eqnarray*}
and we wish to obtain estimates for the terms $Q_{h,\tau} P u$ and
$(Q_{h,\tau} - I )Q u $ (we recall that, being projection operators,
$Q(Pu) =0$ and $Q^2 = Q$).

Since $P u= F P u$ and $Q_{h,\tau} F_{h,-i\tau} u = 0$, for the
first term we obtain
    \begin{eqnarray*}
    Q_{h,\tau} P u & = & Q_{h,\tau} F P u - Q_{h,\tau} F_{h,-i\tau} P u \\
    & = & Q_{h,\tau}(F - F_{h,-i\tau}) P u \\
    & = & Q_{h,\tau}(I - F_{h,-i\tau} - T D_{x,-it}) P u \\
    & = & Q_{h,\tau}(I - F_{h,-i\tau}) P u
    \end{eqnarray*}
and, therefore, by Theorem \ref{fh} we get the following estimate
    \begin{eqnarray*}
    ||Q_{h,\tau} P u||_{l_p(\Omega_{h,\tau})} & \leq &
    ||Q_{h,\tau}|| ~ ||P u - F_{h,-i\tau} P u ||_{l_p(\Omega_{h,\tau})} \\
    & \leq & C ||Q_{h,\tau}|| ~ ||P u||_{C^{1, \alpha}(\Omega)} (h^\alpha +
    \tau^\alpha),
    \end{eqnarray*} taking in account that $Q_ {h,\tau}$ has bounded norm. Moreover, due to
    the fact that $P$ is the projection into the kernel of $D_{h,
    -i\tau},$ it holds $||P u||_{C^{1, \alpha}(\Omega)}<\infty.$

For the second term we remember that $Q u$ can be written as $Q u =
D_{x,-it} g$ where $g \in \stackrel{\circ}{{\cW}^{1}_2} (\Omega).$
This leads to
    \begin{eqnarray*}
    (Q_{h,\tau} - I )Q u  & = & (Q _{h,\tau} - I )D_{x,-it} g \\
    & = & Q_{h,\tau} (D_{x,-it} g - D_{h,-i\tau} g ) + Q_{h,\tau} D_{h,-i\tau} g - D_{x,-it} g \\
    & =& Q_{h,\tau} (D_{x,-it} g - D_{h,-i\tau} g ) + (D_{h,-i\tau} g - D_{x,-it} g),
    \end{eqnarray*}
since  $Q_{h,\tau} D_{h,-i\tau} g = D_{h,-i\tau}g$. Hence, taking
into account the previous calculations, Theorem \ref{fh} and
relations (\ref{rel1}) and (\ref{rel2}) we finally obtain
    \begin{eqnarray*}
    ||(Q_{h,\tau} - I )Q u||_{l_p(\Omega_{h,\tau})} & \leq &
( ||Q_{h,\tau}|| +1) ||D_{h,-i\tau} g - D_{x,-it} g
||_{l_p(\Omega_{h,\tau})} \rightarrow 0
    \end{eqnarray*} as $h, \tau$ goes to zero.
\end{proof}

The above discrete operators allow us to establish a discrete
equivalent of Theorem \ref{Th:26}.

\begin{theo} \label{Th:28}Let $f \in l_2(\Omega_{h,\tau}).$ The solution of the discrete Schr\"odinger problem
    $$\left \{
    \begin{array}{rcl}
 (i  \partial_\tau  - \Delta_h) u & = & f \mbox{ in } \Omega_{h,\tau}  \\
    u & = & 0 \mbox{ on } \partial \Omega_{h,\tau}
    \end{array}
    \right. $$
is given by $u=T_{h,-i\tau}Q_{h,\tau}T_{h,-i\tau} f.$
\end{theo}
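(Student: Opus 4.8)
The goal is to prove the discrete analogue of Theorem \ref{Th:26}, namely that $u = T_{h,-i\tau} Q_{h,\tau} T_{h,-i\tau} f$ solves the discrete Schrödinger problem. The strategy is to mirror exactly the continuous proof (stated to be in \cite{CV} for $p=2$), using the discrete tools already established: the inversion property $D_{h,-i\tau} T_{h,-i\tau} u = u$ (equation (\ref{inversodireitaD-+})), the factorization $(D_{h,-i\tau})^2 = (-\Delta_h + i\partial_\tau)\e_0$ (equation (\ref{factH})), and the direct decomposition $l_2(\Omega_{h,\tau}) = \ker D_{h,-i\tau} \oplus D_{h,-i\tau}(\stackrel{\circ}{w_2^1})$ from Theorem \ref{dd} with its projectors $P_{h,\tau}, Q_{h,\tau}$.

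The plan is first to verify that $u = T_{h,-i\tau} Q_{h,\tau} T_{h,-i\tau} f$ satisfies the equation. Applying $D_{h,-i\tau}$ once and using (\ref{inversodireitaD-+}) gives $D_{h,-i\tau} u = Q_{h,\tau} T_{h,-i\tau} f$. The key algebraic observation is that $Q_{h,\tau}$ projects onto the range $D_{h,-i\tau}(\stackrel{\circ}{w_2^1})$, so $Q_{h,\tau} T_{h,-i\tau} f$ lies in that range and is annihilated by the complementary projector; more to the point, applying $D_{h,-i\tau}$ a second time should recover $f$. Concretely, I would show $D_{h,-i\tau}(D_{h,-i\tau} u) = D_{h,-i\tau} Q_{h,\tau} T_{h,-i\tau} f = D_{h,-i\tau} T_{h,-i\tau} f = f$, where the middle equality uses that $D_{h,-i\tau}$ acts as the identity on the $Q_{h,\tau}$-component (since $Q_{h,\tau} g = D_{h,-i\tau} v$ for some $v \in \stackrel{\circ}{w_2^1}$, and the kernel component is killed by $D_{h,-i\tau}$), and the last equality is (\ref{inversodireitaD-+}) again. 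Then $(D_{h,-i\tau})^2 u = f$, and by the factorization (\ref{factH}) this reads $(i\partial_\tau - \Delta_h) u = f$, which is precisely the discrete equation.

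Second, I would verify the homogeneous boundary condition $u = 0$ on $\partial\Omega_{h,\tau}$. This is where the outer Teodorescu operator and the membership $Q_{h,\tau} T_{h,-i\tau} f \in D_{h,-i\tau}(\stackrel{\circ}{w_2^1}(\Omega_{h,\tau}))$ are used: by Theorem \ref{dd} the range of $Q_{h,\tau}$ consists of discrete functions with zero boundary values in the discrete Sobolev sense, and applying $T_{h,-i\tau}$ preserves this, so $u$ vanishes on $\partial\Omega_{h,\tau}$. I expect this boundary step to be the main obstacle, since the precise discrete Borel--Pompeiu/Stokes bookkeeping that guarantees $T_{h,-i\tau}$ maps into $\stackrel{\circ}{w_2^1}$ (the discrete analogue of $u = TQTf$ landing in functions with zero boundary values) must be invoked carefully; in the continuous case this rests on properties of the Cauchy--Bitsadze operator $F$ and the decomposition, and one must check the discrete operators $F_{h,-i\tau} = I - T_{h,-i\tau} D_{h,-i\tau}$ and $Q_{h,\tau}$ interact so that the boundary trace genuinely vanishes.

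Finally, I would remark that uniqueness follows from the directness of the decomposition in Theorem \ref{dd}: since $\ker D_{h,-i\tau}$ and $D_{h,-i\tau}(\stackrel{\circ}{w_2^1})$ intersect trivially, the representation of the solution through the projectors $P_{h,\tau}, Q_{h,\tau}$ is unambiguous, so the constructed $u$ is the unique solution in $\stackrel{\circ}{w_2^1}(\Omega_{h,\tau})$. This closes the argument as the faithful discrete transcription of the continuous proof in \cite{CV}.
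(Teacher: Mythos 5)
Your proposal is correct and follows essentially the same route the paper itself uses: the paper states this theorem without a dedicated proof, but carries out exactly your computation in Section 5 for the nonlinear analogue, namely $(i\partial_\tau-\Delta_h)u=(D_{h,-i\tau})^2\,T_{h,-i\tau}Q_{h,\tau}T_{h,-i\tau}f = D_{h,-i\tau}Q_{h,\tau}T_{h,-i\tau}f = f$ via the right-inverse property and the fact that $D_{h,-i\tau}$ kills the $P_{h,\tau}$-component, with the boundary condition read off from $Q_{h,\tau}T_{h,-i\tau}f\in D_{h,-i\tau}(\stackrel{\circ}{w_2^1}(\Omega_{h,\tau}))$. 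Your additional remarks on the discrete Borel--Pompeiu bookkeeping for the boundary trace and on uniqueness via the directness of the decomposition go slightly beyond what the paper records, but are consistent with it.
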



\section{The non-linear Schr\"odinger problem}

Let us now consider the non-linear Schr\"odinger problem
\begin{equation}\left \{
    \begin{array}{cccc}
    i\partial_t u-\Delta u  & = & M(u) & \mbox{ in }
    \Omega   \nonumber \\
    u & = & 0 & \mbox{ on }\partial \Omega
    \end{array}
    \right. \label{nonlinearSchrodinger} \end{equation}
where $M(u)=|u|^2u+f,$ with $f \in L_2(\Omega),$ and $
|u|^2=\sum^3_{j=0}(u^j)^2$. This problem can be reduced to
\begin{equation}   u  = T Q T  M(u )  \mbox{ in }
    \Omega,\label{metcont}
    \end{equation} a problem for which the next theorem proves existence and uniqueness of solution (see \cite{B}, \cite{CV} for details).

\begin{theo}\label{Th:5.2}
The problem (\ref{nonlinearSchrodinger}) has an unique solution
given in terms of the iterative method $$u_{n+1} = TQT M(u_n)$$ if
$f \in L_2(\Omega)$ satisfies the condition
    \begin{eqnarray*}
    ||f||_{L_2} & \leq & \frac{1}{36 \cdot 2^{m+1}}.
    \end{eqnarray*}
Moreover, the iteration method converges for each starting point
$u_0 \in \stackrel{\circ}{W}_2^{1}(\Omega)$ such that
    \begin{eqnarray*}
    ||u_0||_{L_2} & \leq & \frac{1}{6 \cdot 2^{m+1}} + W,
    \end{eqnarray*}
with $W = \sqrt{\frac{1}{36 \cdot
2^{2(m+1)}}-\frac{||f||_{L_2}}{2^{m+1}}}$.
\end{theo}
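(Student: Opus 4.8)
The plan is to recast the boundary value problem (\ref{nonlinearSchrodinger}) as the fixed-point equation $u=A(u)$ with $A(u)=TQT\,M(u)$ and $M(u)=|u|^{2}u+f$, exactly the reduction (\ref{metcont}). Since Theorem~\ref{Th:26} identifies $TQT$ as the solution operator of the linear inhomogeneous Schr\"odinger problem, a fixed point of $A$ is precisely a solution of (\ref{nonlinearSchrodinger}); so the whole matter is to solve $u=A(u)$ by a contraction/majorant argument on a closed ball of $\stackrel{\circ}{W}_2^{1}(\Omega)$, the iteration being $u_{n+1}=A(u_{n})$.

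First I would assemble the two analytic ingredients. On the operator side, the boundedness of the continuous operators $T$ and $Q$ (the continuous analogues of Theorem~\ref{Th:42}; see \cite{GS}, \cite{CV}) yields a uniform estimate $\|TQT\,g\|_{L_2}\leq N\|g\|_{L_2}$, and I track the constant so that $N=3$. On the nonlinear side, using the three-dimensional Sobolev embedding $\stackrel{\circ}{W}_2^{1}(\Omega)\hookrightarrow L_6(\Omega)$ together with H\"older's inequality, I would prove a growth estimate $\|M(u)\|_{L_2}\leq\beta\|u\|^{2}+\|f\|_{L_2}$ and a local Lipschitz estimate $\||u|^{2}u-|v|^{2}v\|_{L_2}\leq\beta(\|u\|+\|v\|)\|u-v\|$ on the ball, the algebraic input being $|u|^{2}u-|v|^{2}v=|u|^{2}(u-v)+(|u|^{2}-|v|^{2})v$ with $|u|^{2}-|v|^{2}=\langle u+v,u-v\rangle$; one power of the (uniformly bounded) $L_6$-norm of the iterates is absorbed into $\beta$, which is tracked so that $\beta=2^{m+1}$.

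Combining the two, the self-mapping requirement $\|A(u)\|_{L_2}\leq R$ on the closed ball $\overline{B}(0,R)$ becomes the scalar inequality $N\beta R^{2}-R+N\|f\|_{L_2}\leq 0$. Its discriminant $1-4N^{2}\beta\|f\|_{L_2}$ is nonnegative exactly when $\|f\|_{L_2}\leq\frac{1}{4N^{2}\beta}=\frac{1}{36\cdot 2^{m+1}}$, which is the stated hypothesis; in that case the admissible radii are $R\in[\,\frac{1}{6\cdot 2^{m+1}}-W,\ \frac{1}{6\cdot 2^{m+1}}+W\,]$, with $W$ precisely the square root in the statement (the quadratic formula, so that the reality of $W$ is the very bound on $\|f\|_{L_2}$). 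I would then control the norms of the iterates by the scalar majorant $R_{n+1}=N\beta R_{n}^{2}+N\|f\|_{L_2}$, whose fixed points are the two roots $\frac{1}{6\cdot 2^{m+1}}\pm W$: from any $u_0$ with $\|u_0\|_{L_2}\leq\frac{1}{6\cdot 2^{m+1}}+W$ the norms stay in $[0,\frac{1}{6\cdot 2^{m+1}}+W]$ and decrease toward the attracting value $\frac{1}{6\cdot 2^{m+1}}-W$. Since the Lipschitz constant of $A$ on $\overline{B}(0,R)$ is at most $2N\beta R$ and $2N\beta(\frac{1}{6\cdot 2^{m+1}}-W)<1$, the tail of $\{u_n\}$ is genuinely contractive; hence the sequence is Cauchy, converges to a fixed point, and that fixed point is the unique solution in the ball.

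The hard part will be the nonlinear estimate and its bookkeeping: controlling the cubic term $|u|^{2}u$ in $L_2$ forces the embedding $\stackrel{\circ}{W}_2^{1}\hookrightarrow L_6$, and the numerology closes only if $N$ and $\beta$ are tracked so that $N^{2}\beta=9\cdot 2^{m+1}$ (equivalently $N=3$, $\beta=2^{m+1}$), making the discriminant condition reproduce $\|f\|_{L_2}\leq\frac{1}{36\cdot 2^{m+1}}$. A secondary subtlety, which is why I would argue through the monotone scalar majorant rather than claim a one-step contraction, is that the guaranteed basin radius $\frac{1}{6\cdot 2^{m+1}}+W$ exceeds the naive contraction radius $\frac{1}{6\cdot 2^{m+1}}$ at which $2N\beta R=1$; contractivity is recovered only along the tail, once the iterates' norms have descended below $\frac{1}{6\cdot 2^{m+1}}$ toward the attracting fixed point.
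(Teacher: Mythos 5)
Your overall strategy is the right one, and it is the one the paper itself relies on: the paper does not prove Theorem 5.2 but defers to \cite{B} and \cite{CV}, where the argument is exactly your scheme --- recast the boundary value problem as the fixed point equation $u=TQT\,M(u)$ via Theorem \ref{Th:26}, bound $TQT$ by a constant $N$, bound the nonlinearity, and read the admissible radii off the scalar quadratic $N\beta R^{2}-R+N\|f\|_{L_2}=0$. Your bookkeeping correctly recovers $\frac{1}{6\cdot 2^{m+1}}\pm W$ as the two roots and $\|f\|_{L_2}\leq\frac{1}{36\cdot 2^{m+1}}$ as nonnegativity of the discriminant, so the numerology closes.

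The genuine gap is in the nonlinear estimate. Since $M(u)=|u|^{2}u+f$ is cubic, H\"older plus the Sobolev embedding give $\||u|^{2}u\|_{L_2}\leq\|u\|_{L_6}^{3}\leq C^{3}\|u\|^{3}$, not $\beta\|u\|^{2}$ with a fixed $\beta$. Your device of ``absorbing one power of the (uniformly bounded) $L_6$-norm of the iterates into $\beta$'' is circular: the uniform bound on the iterates is precisely what the majorant iteration $R_{n+1}=N\beta R_{n}^{2}+N\|f\|_{L_2}$ is supposed to produce, so it cannot be used to define the $\beta$ that drives that iteration; and if you instead let $\beta$ depend on the ball radius, the self-mapping condition becomes cubic, $N\beta_{0}R^{3}+N\|f\|_{L_2}\leq R$, whose solvability threshold and roots are not the ones in the statement. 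To close the proof you must establish a genuinely quadratic Lipschitz bound $\||u|^{2}u-|v|^{2}v\|_{L_2}\leq\beta(\|u\|+\|v\|)\|u-v\|$ with a \emph{fixed} constant $\beta$ in the norm actually used for the ball, which is what \cite{CV} does and is presumably the origin of the otherwise unexplained constant $2^{m+1}$. Two smaller points: at the boundary $\|u_0\|_{L_2}=\frac{1}{6\cdot 2^{m+1}}+W$ the majorant sequence sits at the repelling root $R_{+}$ and does not decrease, so your ``tail contraction'' needs a separate word there; and since the Lipschitz constant on the full ball of radius $R_{+}$ exceeds $1$, the uniqueness you actually obtain is only within the smaller ball of radius $R_{-}$ (or wherever $2N\beta R<1$), a qualification the argument should state since the theorem asserts uniqueness without one.
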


Based on the discrete operators previously introduced we construct
the discrete version of problem (\ref{metcont}) for our bounded
domain
\begin{equation}   u  = T_{h,-i\tau} Q_{h,\tau} T_{h,-i\tau}  M(u )  \mbox{ in }
    \Omega_{h,\tau}. \label{met}
    \end{equation}

Indeed, let $v$ be a  solution of (\ref{met}). Then
\begin{eqnarray*}
(i \partial_\tau - \Delta_h) v &=& D_{h,-i\tau}D_{h,-i\tau}
[T_{h,-i\tau} Q_{h,\tau} T_{h,-i\tau}  M(v ) ]\\
&=& D_{h,-i\tau} [ Q_{h,\tau} T_{h,-i\tau}  M(v ) ]\\
& = & M(v),
\end{eqnarray*}
and due to the properties of the projector $Q_{h,\tau}$ we have
$v=0$ on $\partial\Omega_{h,\tau}$.

Using the same ideas as in the continuous case (see \cite{CV}) we
get results regarding the convergence and uniqueness of the discrete
iterative method $u_{n+1}= T_{h,-i\tau} Q_{h,\tau} T_{h,-i\tau}
M(u_n).$

\begin{theo} If $f \in l_{2}(\Omega_{h,\tau})$ then the discrete problem (\ref{met}) has a
unique solution $u \in \stackrel{\circ}{w_{2}^1}(\Omega_{h,\tau})$
whenever
    \begin{eqnarray*}
    ||f||_{l_2(\Omega_{h,\tau})} & \leq & \frac{1}{36C_{h,\tau}}
    \end{eqnarray*}
and the initial term $u_0 \in \stackrel{\circ}{w_{2}^1}
(\Omega_{h,\tau})$ satisfies
    \begin{eqnarray*}
    ||u_0||_{l_2(\Omega_{h,\tau})} &
    \leq & \frac{1}{6C_{h,\tau }} + W_{h,\tau},
    \end{eqnarray*}
with $W_{h,\tau} = \sqrt{ \frac{1}{36C_{h,\tau}} -
\frac{||f||_{l_2(\Omega_{h,\tau})}}{C_{h,\tau}}}$.
\end{theo}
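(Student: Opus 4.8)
The plan is to mirror the continuous-case argument for Theorem \ref{Th:5.2}, replacing the analytic operators $T$, $Q$ and the norm $\|\cdot\|_{L_2}$ by their discrete counterparts $T_{h,-i\tau}$, $Q_{h,\tau}$ and $\|\cdot\|_{l_2(\Omega_{h,\tau})}$, and then invoking the Banach fixed-point theorem on the map
\[
\Phi(u) = T_{h,-i\tau}\, Q_{h,\tau}\, T_{h,-i\tau}\, M(u),
\qquad M(u)=|u|^2u+f,
\]
acting on the discrete Sobolev space $\stackrel{\circ}{w_{2}^1}(\Omega_{h,\tau})$. Since equation (\ref{met}) is exactly the fixed-point equation $u=\Phi(u)$, existence and uniqueness of a solution follow once I show that $\Phi$ is a contraction on a suitable closed ball. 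The constant $C_{h,\tau}$ appearing in the statement is to be read as a joint bound for the operator norms of the discrete Teodorescu and projection operators: by Theorem \ref{Th:42} we have $\|T_{h,-i\tau}\|\le C$, and $Q_{h,\tau}$ has bounded norm as used in the convergence proof, so the composite $T_{h,-i\tau}Q_{h,\tau}T_{h,-i\tau}$ is bounded with a norm I denote through $C_{h,\tau}$.

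First I would estimate the nonlinear term. The key algebraic fact is that for the cubic nonlinearity one has a Lipschitz estimate of the form
\[
\|M(u)-M(v)\|_{l_2(\Omega_{h,\tau})}
\le \bigl(\|u\|^2+\|u\|\,\|v\|+\|v\|^2\bigr)\,\|u-v\|_{l_2(\Omega_{h,\tau})},
\]
which is the discrete analogue of the standard estimate for $|u|^2u$; on the ball of radius $R$ this gives a Lipschitz constant $3R^2$ for $M$. Composing with the bounded linear operator $T_{h,-i\tau}Q_{h,\tau}T_{h,-i\tau}$ of norm at most $C_{h,\tau}^{-1}$-type constant (tracking the numerical factors so that the thresholds $\frac{1}{36 C_{h,\tau}}$ and $\frac{1}{6 C_{h,\tau}}$ come out), I obtain that $\Phi$ is Lipschitz with constant $\le C_{h,\tau}^{-1}\cdot 3R^2$. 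Choosing the radius of the ball as $R=\frac{1}{6 C_{h,\tau}}+W_{h,\tau}$ and imposing the smallness condition $\|f\|_{l_2(\Omega_{h,\tau})}\le\frac{1}{36 C_{h,\tau}}$ makes this constant strictly less than $1$.

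Next I would verify that $\Phi$ maps the closed ball of radius $R$ into itself. Writing $\|\Phi(u)\|\le C_{h,\tau}^{-1}\|M(u)\|\le C_{h,\tau}^{-1}(\|u\|^3+\|f\|)$, the self-mapping condition reduces to a scalar quadratic inequality in $R$ of the form $C_{h,\tau}^{-1}(R^3+\|f\|)\le R$; solving this quadratic is precisely where the formula $W_{h,\tau}=\sqrt{\frac{1}{36 C_{h,\tau}}-\frac{\|f\|_{l_2(\Omega_{h,\tau})}}{C_{h,\tau}}}$ and the threshold on $u_0$ originate, the discriminant being non-negative exactly under the hypothesis on $\|f\|$. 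Once self-mapping and contraction are both established, the Banach fixed-point theorem yields a unique fixed point in the ball, and the iteration $u_{n+1}=\Phi(u_n)$ converges to it for any starting point $u_0$ in the ball, i.e. satisfying $\|u_0\|_{l_2(\Omega_{h,\tau})}\le\frac{1}{6 C_{h,\tau}}+W_{h,\tau}$.

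The main obstacle I anticipate is not the fixed-point machinery itself, which is routine, but the bookkeeping of the numerical constants so that the discrete thresholds match the continuous ones in Theorem \ref{Th:5.2}; in particular one must make the precise value of $C_{h,\tau}$ — arising from $\|T_{h,-i\tau}\|$ and $\|Q_{h,\tau}\|$ — explicit enough to justify the factors $36$ and $6$. A secondary point requiring care is confirming that the fixed point genuinely lies in $\stackrel{\circ}{w_{2}^1}(\Omega_{h,\tau})$ with zero boundary values: this follows because $Q_{h,\tau}$ projects onto $D_{h,-i\tau}(\stackrel{\circ}{w_{2}^1}(\Omega_{h,\tau}))$ and $T_{h,-i\tau}$ is its discrete right inverse, so that the boundary condition is built into the range of the composite operator, exactly as verified just before the statement for any solution $v$ of (\ref{met}).
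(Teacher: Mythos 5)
Your proposal follows exactly the route the paper intends: the paper omits the proof of this theorem, stating only that it is ``similar to the one in the continuous case'' (Theorem \ref{Th:5.2}, proved in \cite{B}, \cite{CV}), and that continuous proof is precisely the Banach fixed-point/contraction argument on a ball for $u\mapsto TQT M(u)$ that you transcribe with $T_{h,-i\tau}$, $Q_{h,\tau}$ and the $l_2(\Omega_{h,\tau})$ norm, with $C_{h,\tau}$ absorbing the discrete operator-norm bounds from Theorem \ref{Th:42} and Theorem \ref{dd}. Your acknowledged loose end --- the exact bookkeeping giving the factors $36$ and $6$ and the formula for $W_{h,\tau}$ --- is the only substantive content of the proof, and it is the same bookkeeping the paper itself delegates to \cite{CV}, so the approaches coincide.
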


The proof of this theorem, being similar to the one in the
continuous case, will be omitted.


The following result shows that the solution obtained for the
discrete problem, which we will denote by $u_\ast$, converges to the
solution obtained for the continuous, which we will denote by $u$.
In the proof of the following theorem the restriction of $M(u)$ to
the space-time grid will be denote by $M_{h,\tau}(u)$.

\begin{theo}
Let $f \in L_2(\Omega)$. Then $u_\ast$ converges to $u$ in
$\Omega_{h,\tau}$ whenever $h,\tau \rightarrow 0$.
\end{theo}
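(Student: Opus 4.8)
The plan is to play the two fixed-point characterizations against each other. By construction the continuous solution satisfies $u = TQTM(u)$ and the discrete solution satisfies $u_\ast = T_{h,-i\tau}Q_{h,\tau}T_{h,-i\tau}M(u_\ast)$, where in the discrete equation the cubic nonlinearity is evaluated pointwise on the grid. Writing $M_{h,\tau}(u)$ for the restriction of the continuous $M(u)$ to $\Omega_{h,\tau}$ and inserting it as an intermediate term, I would split
\[
u_\ast - u = T_{h,-i\tau}Q_{h,\tau}T_{h,-i\tau}\bigl[M(u_\ast)-M_{h,\tau}(u)\bigr] + \bigl[T_{h,-i\tau}Q_{h,\tau}T_{h,-i\tau}M_{h,\tau}(u) - TQTM(u)\bigr],
\]
where in the second bracket $TQTM(u)$ is understood restricted to the grid. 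Call these terms $(A)$ and $(B)$.

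Term $(A)$ is the contraction term. Since $M(u_\ast)-M_{h,\tau}(u)=M(u_\ast)-M(u)$ pointwise on the grid (the inhomogeneity $f$ cancels) and the cubic map $|w|^2w$ is locally Lipschitz, on the common ball in which both solutions are confined by the smallness hypotheses of the two existence theorems one has $\|M(u_\ast)-M_{h,\tau}(u)\|_{l_2(\Omega_{h,\tau})}\le L\,\|u_\ast-u\|_{l_2(\Omega_{h,\tau})}$ for a Lipschitz constant $L$ controlled by that ball's radius. Combining this with the uniform operator bound of Theorem \ref{Th:42} applied to each factor $T_{h,-i\tau}$ and with the boundedness of $Q_{h,\tau}$ gives $\|(A)\|_{l_2(\Omega_{h,\tau})}\le \kappa\,\|u_\ast-u\|_{l_2(\Omega_{h,\tau})}$, where $\kappa=L\,\|T_{h,-i\tau}Q_{h,\tau}T_{h,-i\tau}\|$ is precisely the discrete contraction factor, which is $<1$ uniformly in $h,\tau$ under the stated conditions.

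Term $(B)$ is the consistency term and carries the fixed function $M(u)$ through the discrete operator string. Introducing the continuous intermediate images $g_1=TM(u)$ and $g_2=QTM(u)$, I would telescope by peeling off one discrete factor at a time: the outermost $T_{h,-i\tau}$ is compared with $T$ acting on $g_2$, leaving an inner difference $Q_{h,\tau}T_{h,-i\tau}M_{h,\tau}(u)-Q g_1$ on which the same device is applied to $Q_{h,\tau}$ against $Q$, and finally to $T_{h,-i\tau}$ against $T$ on $M(u)$ itself. This reduces $(B)$ to three pieces, each isolating a single operator difference $(T_{h,-i\tau}-T)$ or $(Q_{h,\tau}-Q)$ acting on one of $M(u),g_1,g_2$, the remaining factors being uniformly bounded by Theorem \ref{Th:42} and the boundedness of $Q_{h,\tau}$. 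The operator-convergence theorems $T_{h,-i\tau}\to T$ and $Q_{h,\tau}\to Q$ then drive each piece to zero, so that $\|(B)\|_{l_2(\Omega_{h,\tau})}=\epsilon(h,\tau)\to 0$ as $h,\tau\to 0$.

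Collecting the two estimates yields $\|u_\ast-u\|_{l_2(\Omega_{h,\tau})}\le \kappa\,\|u_\ast-u\|_{l_2(\Omega_{h,\tau})}+\epsilon(h,\tau)$, whence $\|u_\ast-u\|_{l_2(\Omega_{h,\tau})}\le \epsilon(h,\tau)/(1-\kappa)\to 0$, which is the assertion. The step I expect to be the main obstacle is the regularity bookkeeping inside term $(B)$: the convergence result for $T_{h,-i\tau}$ was proved only for $C^{1,\alpha}$ data, so to apply it legitimately I must certify that $M(u)$ and the intermediate images $g_1,g_2$ inherit enough H\"older regularity; given the singular Schr\"odinger kernel this demands the smoothing estimates behind $T$ and $Q$ rather than a routine verification. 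A secondary point is confirming that the discrete contraction factor $\kappa$ stays bounded away from $1$ uniformly in $h,\tau$, which should follow from the convergence of the discrete constants $C_{h,\tau}$ to their continuous counterpart, but must be checked so that the smallness conditions of the two existence theorems are genuinely compatible.
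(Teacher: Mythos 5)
Your proposal is correct and follows essentially the same route as the paper: both arguments play the two fixed-point identities against each other, split the difference into a contraction term (absorbed via the cubic Lipschitz bound and the smallness hypotheses, giving a factor strictly less than one) and a consistency term that is telescoped into single operator differences $(T_{h,-i\tau}-T)$ and $(Q_{h,\tau}-Q)$ killed by the convergence theorems. The only difference is technical packaging: the paper runs the whole argument on the regularized operators $T^\epsilon$, $T^\epsilon_{h,-i\tau}$ and lets $\epsilon\rightarrow 0$ at the end, and it separates out the grid-restriction error of the nonlinearity and the $f$-terms as additional pieces of the telescope; the regularity caveat you flag for applying the $T$-convergence theorem is present in the paper's proof as well and is not resolved there in more detail than in your sketch.
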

\begin{proof}
Again, we need to use the regularized Teodorescu operator. We shall
denote $u_\ast^\epsilon =  T_{h,-i\tau}^\epsilon
Q_{h,\tau}T_{h,-i\tau}^\epsilon M_{h,\tau}(u_\ast^\epsilon) $ and
$u^\epsilon = T^\epsilon QT^\epsilon M(u^\epsilon).$ We have
    \begin{eqnarray*}
    || u_\ast^\epsilon-u^\epsilon ||_{l_2(\Omega_{h,\tau})}
    & \leq & \underbrace{|| T_{h,-i\tau}^\epsilon Q_{h,\tau}T_{h,-i\tau}^\epsilon M_{h,\tau}(u^\epsilon)
    - T^\epsilon QT^\epsilon M(u^\epsilon) ||_{l_2(\Omega_{h,\tau})}}_{(\mathbf{I})} \\
    & & ~~ + || T_{h,-i\tau}^\epsilon Q_{h,\tau}T_{h,-i\tau}^\epsilon\left(M_{h,\tau}(u^\epsilon_\ast)-
    M_{h,\tau}(u^\epsilon)\right) ||_{l_2(\Omega_{h,\tau})} \\
        & \leq &  (\mathbf{I}) + C_{h,\tau} || u^\epsilon_\ast -
    u^\epsilon ||_{l_2(\Omega_{h,\tau})} \left( \left|\left|u^\epsilon_\ast\right|\right|_{l_2(\Omega_{h,\tau})}
    +\left|\left|u^\epsilon \right|\right|_{l_2(\Omega_{h,\tau})} \right)
    \end{eqnarray*}
which implies that
    \begin{eqnarray*}
    || u^\epsilon_\ast-u^\epsilon ||_{l_2(\Omega_{h,\tau})}
    & \leq &  (\mathbf{I}) \left[ 1- C_{h,\tau} \left( \left|\left|u^\epsilon_\ast \right|\right|_{l_2(\Omega_{h,\tau})}
    +\left|\left|u^\epsilon \right|\right|_{l_2(\Omega_{h,\tau})} \right)
    \right]^{-1},
    \end{eqnarray*}
where $C_{h,\tau}$ is a positive constant which depends from $h$ and
$\tau$. By Theorem 5.2 we can guarantee that
    \begin{eqnarray*}
    \left|\left|u^\epsilon_\ast\right|\right|_{l_2(\Omega_{h,\tau})} &
    \leq & \frac{1}{6C_{h,\tau}} + W_{h,\tau},
    \end{eqnarray*}
with
$W_{h,\tau}=\sqrt{\frac{1}{36C_{h,\tau}}-\frac{||f||_{l_2(\Omega_{h,\tau})}}{C_{h,\tau}}}$.

This inequality, together with Theorem \ref{Th:5.2}, ensures that
for sufficiently small $h$ and $\tau$, the following relation
    \begin{eqnarray*}
    1- C_{h,\tau}
    \left( \left|\left|u^\epsilon_\ast\right|\right|_{l_2(\Omega_{h,\tau})}
    +\left|\left|u^\epsilon \right|\right|_{l_2(\Omega_{h,\tau})} \right)& > &
    0
    \end{eqnarray*}
holds. Therefore, the convergence of $u_\ast$ to $u$ depends only on
the term ($\mathbf{I}$). Hereby, we have
\begin{gather*}
    (\mathbf{I})= || T_{h,-i\tau}^\epsilon Q_{h,\tau}T_{h,-i\tau}^\epsilon
    M_{h,\tau}(u^\epsilon) - QT^\epsilon M(u^\epsilon)||_{l_2(\Omega_{h, \tau})} \\
     \leq  \underbrace{|| T_{h,-i\tau}^\epsilon  Q_{h,\tau} T_{h,-i\tau}^\epsilon
    \left( M_{h,\tau}^\ast(u^\epsilon) - M^\ast(u^\epsilon) \right)||_{l_2(\Omega_{h, \tau})}}_{(\mathbf{A})} \\
    +     \underbrace{|| T_{h,-i\tau}^\epsilon  Q_{h,\tau} \left(T_{h,-i\tau}^\epsilon - T^\epsilon \right)
    M^\ast(u^\epsilon)||_{l_2(\Omega_{h, \tau})}}_{(\mathbf{B})} + \underbrace{|| T_{h,-i\tau}^\epsilon  \left(Q_{h,\tau}-Q\right)
    T^\epsilon M^\ast(u^\epsilon)||_{l_2(\Omega_{h, \tau})}}_{(\mathbf{C})} \\
    +\underbrace{|| T_{h,-i\tau}^\epsilon Q_{h,\tau} \left(T_{h,-i\tau}^\epsilon - T^\epsilon\right)f
    ||_{l_2(\Omega_{h, \tau})}}_{(\mathbf{D})}+ \underbrace{|| T_{h,-i\tau}^\epsilon \left(Q_{h,\tau}
    -Q\right)T^\epsilon f||_{l_2(\Omega_{h, \tau})}}_{(\mathbf{E})},
    \end{gather*}
where $M^\ast(u^\epsilon)=|u^\epsilon|^2u^\epsilon$ and
$M_{h,\tau}^\ast(u^\epsilon)$ denotes its restriction to the
space-time grid. By Theorem 4.6 we can say that ($\mathbf{B}$) and
($\mathbf{D}$) tend to zero as $h, \tau \rightarrow 0 .$ Also,
Theorem 4.8 implies the same result for both ($\mathbf{C}$) and
($\mathbf{E}$). Finally, for ($\mathbf{A}$) we have, from the
boundedness of the discrete operators, the following relation
    \begin{gather*}
    || T_{h,-i\tau}^\epsilon Q_{h,\tau} T_{h,-i\tau}^\epsilon \left( M_{h,\tau}^\ast(u^\epsilon) - M^\ast(u^\epsilon)
    \right)||_{l_2(\Omega_{h, \tau})} \\
     \leq  || T_{h,-i\tau}^\epsilon  Q_{h,\tau}     T_{h,-i\tau}^\epsilon||_{l_2(\Omega_{h, \tau})} ||M_{h,\tau}^\ast(u^\epsilon) - M^\ast(u^\epsilon)||_{l_2(\Omega_{h, \tau})} \\
     \leq  C_1 C_{h,\tau},
    \end{gather*}
where $C_1$ is a finite constant and $C_{h,\tau}$ is a constant
which depends on $h$ and $\tau$ and goes to zero with $h$ and
$\tau.$ Therefore, ($\mathbf{I}$) tends to zero when $h,\tau
\rightarrow 0$, thus, proving our result as $\epsilon\rightarrow 0$.
\end{proof}

\section{Numerical Examples}

In order to study the rate of convergence of our method for
different mesh sizes, we shall present some numerical examples. For
simplicity sake, we shall use a cubic space domain $[-a,a]^3$ with
an equidistant discretization grid of $(N+1)^3$ points. Also, for
the discretization of the time domain we shall consider an
equidistant grid with M+1 mesh-points. At this point, we emphasize
that the choice of $M$ and $N$ takes into account the restriction
$\frac{\tau}{h^2} < \frac{1}{6\pi^2}$ imposed by Theorem 4.1.

For all the examples below we will be presenting a table with the
$l^1-$error between the approximated solution and the exact solution
at given instants of time.\\

\textbf{Example 1:} As a first example, we consider an exact
real-valued $C^{\infty}$ solution $ u = (0,u_1,u_2,u_3)$ for the
problem (\ref{nonlinearSchrodinger}), where
    \begin{gather*}
    u_1(x,t) = e^{-x_1}~\cos\left( \pi t + \frac{\pi}{2} \right)~\sin(\pi x_1x_2x_3) \\
    u_2(x,t) =  u_3(x,t) =0,
    \end{gather*}
and the corresponding right hand side $f = i \partial_t u -\Delta u
- |u^2|u.$

In the following table we show the approximation error between the
exact solution $u$ and its discrete approximation $u_{h,\tau}$ on
the domain $\Omega = [-5,5]^3\times[0,2]$ for different mesh sizes.

\begin{center}\begin{small}
\begin{tabular}{|c|c|c|c|c|}
  \multicolumn{5}{c}{\textbf{Table 1}} \\
  \hline \hline
  N & M & t=0 & t=0.4 & t=0.8 \\
  \hline
  20 & 450   & 2.3313$\times 10^{-3}$ & 1.2799$\times 10^{-3}$ & 5.7386$\times 10^{-4}$  \\
  25 & 703   & 1.5265$\times 10^{-3}$ & 8.3774$\times 10^{-4}$ & 3.7642$\times 10^{-4}$  \\
  30 & 1013  & 1.0765$\times 10^{-3}$ & 5.9073$\times 10^{-4}$ & 2.6569$\times 10^{-4}$  \\
  35 & 1378  & 7.9982$\times 10^{-4}$ & 4.3844$\times 10^{-4}$ & 1.9706$\times 10^{-4}$  \\
  40 & 1800  & 6.1732$\times 10^{-4}$ & 3.3895$\times 10^{-4}$ & 1.5228$\times 10^{-4}$  \\
  45 & 2278  & 4.9075$\times 10^{-4}$ & 2.6919$\times 10^{-4}$ & 1.2107$\times 10^{-4}$  \\
  50 & 2813  & 3.9937$\times 10^{-4}$ & 2.1923$\times 10^{-4}$ & 9.8534$\times 10^{-5}$  \\
  55 & 3404  & 3.3132$\times 10^{-4}$ & 1.8193$\times 10^{-4}$ & 8.1714$\times 10^{-5}$  \\
  \hline \hline
  N & M & t=1.2 & t=1.6 & t=2 \\
  \hline
  20 & 450   & 2.5728$\times 10^{-4}$ & 1.1633$\times 10^{-4}$ & 5.3040$\times 10^{-5}$ \\
  25 & 703   & 1.6914$\times 10^{-4}$ & 7.5998$\times 10^{-5}$ & 3.4520$\times 10^{-5}$ \\
  30 & 1013  & 1.1950$\times 10^{-4}$ & 5.3548$\times 10^{-5}$ & 2.4266$\times 10^{-5}$ \\
  35 & 1378  & 8.8572$\times 10^{-5}$ & 3.9810$\times 10^{-5}$ & 1.7992$\times 10^{-5}$ \\
  40 & 1800  & 6.8416$\times 10^{-5}$ & 3.0738$\times 10^{-5}$ & 1.3868$\times 10^{-5}$ \\
  45 & 2278  & 5.4362$\times 10^{-5}$ & 2.4450$\times 10^{-5}$ & 1.1014$\times 10^{-5}$ \\
  50 & 2813  & 4.4226$\times 10^{-5}$ & 1.9878$\times 10^{-5}$ & 8.9580$\times 10^{-6}$ \\
  55 & 3404  & 3.6700$\times 10^{-5}$ & 1.6502$\times 10^{-5}$ & 7.4280$\times 10^{-6}$ \\
  \hline \hline
  \multicolumn{5}{c}{$l_1-$error between the approximated solution and the exact solution}\\
  \multicolumn{5}{c}{at different instants}
\end{tabular}
\end{small}\end{center}

The following graphics (Figures 1. and 2.) show the evolution of the
$l_1-$norm for the approximation error, with respect to the
space-mesh and to the time-mesh, respectively.
\begin{figure}[htb]
    \begin{minipage}[h]{0.5\linewidth}
    \includegraphics[width=\linewidth]{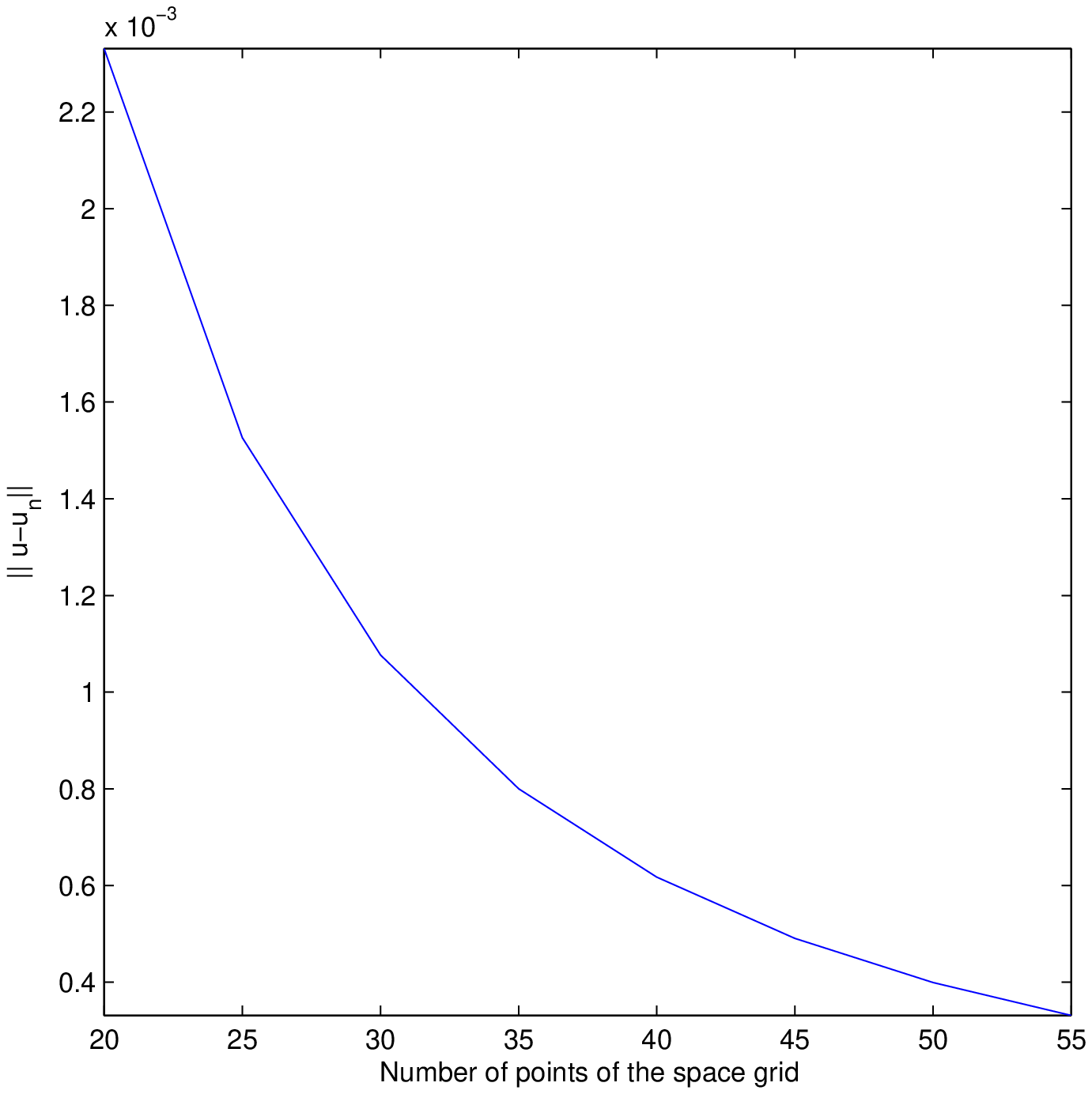}
    \caption{{\small $l_1-$error  for different space steps.}}
    \end{minipage}
    \begin{minipage}[htb]{0.5\linewidth}
    \includegraphics[width=\linewidth]{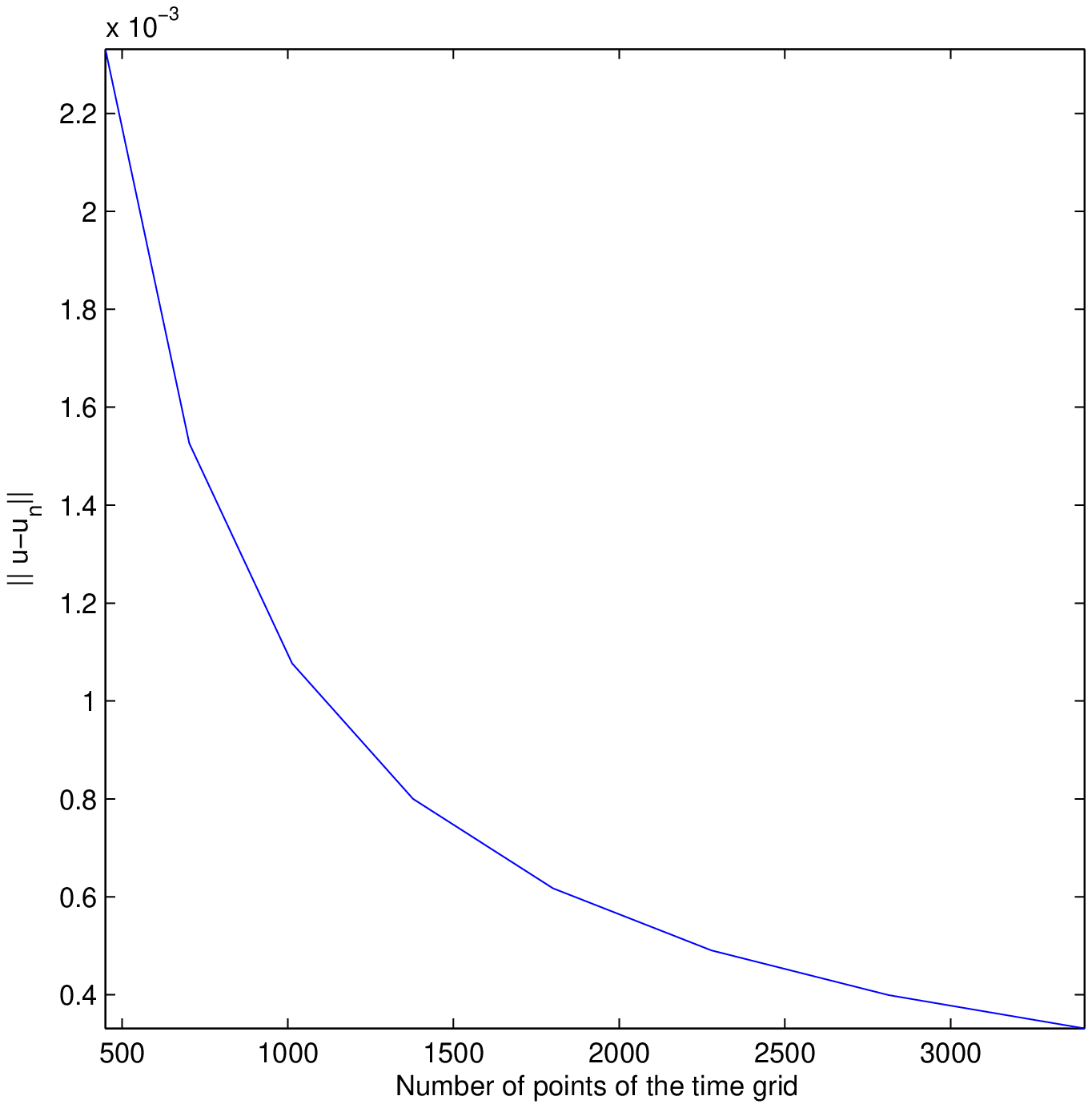}
    \caption{{\small $l_1-$error  for different time steps.}}
    \end{minipage}
    \end{figure}

\textbf{Example 2:}  In this example we consider an exact
complex-valued $C^\infty$ solution $u=(0,u_1,u_2,u_3)$ of
(\ref{nonlinearSchrodinger}), where
    \begin{gather*}
    u_1(x,t) = \left( e^{-t}-1 \right)~(x_1^2-25)~(x_2^2-25)~(x_3^2-25) \\
    u_2(x,t) = 0, ~~~u_3(x,t) = \left( e^{-t}-1 \right)~\sin(\pi
    x_1x_2x_3)~e^{ix_1t}.
    \end{gather*}

Below is the table with the error of approximation between the exact
solution $u$ and its discrete approximation $u_{h,\tau}$ on the
domain $\Omega=[-5,5]^3\times[0,2]$, for different mesh sizes,
\begin{center}\begin{small}
\begin{tabular}{|c|c|c|c|c|}
  \multicolumn{5}{c}{\textbf{Table 2}} \\
  \hline \hline
  N & M & t=0 & t=0.4 & t=0.8 \\
  \hline
  20 & 450   & 4.8846$\times 10^{-3}$ & 2.6819$\times 10^{-3}$ & 1.2024$\times 10^{-3}$  \\
  25 & 703   & 3.1692$\times 10^{-3}$ & 1.7323$\times 10^{-3}$ & 7.8152$\times 10^{-4}$  \\
  30 & 1013  & 2.2183$\times 10^{-3}$ & 1.2172$\times 10^{-3}$ & 5.4746$\times 10^{-4}$  \\
  35 & 1378  & 1.6404$\times 10^{-3}$ & 8.9923$\times 10^{-4}$ & 4.4166$\times 10^{-4}$  \\
  40 & 1800  & 1.2613$\times 10^{-3}$ & 6.9250$\times 10^{-4}$ & 3.1112$\times 10^{-4}$  \\
  45 & 2278  & 9.9988$\times 10^{-4}$ & 5.4847$\times 10^{-4}$ & 2.4668$\times 10^{-4}$  \\
  50 & 2813  & 8.1181$\times 10^{-4}$ & 4.4563$\times 10^{-4}$ & 2.0029$\times 10^{-4}$  \\
  55 & 3404  & 6.7227$\times 10^{-4}$ & 3.6914$\times 10^{-4}$ & 1.6580$\times 10^{-4}$  \\
  \hline \hline
  N & M & t=1.2 & t=1.6 & t=2 \\
  \hline
  20 & 450   & 5.3907$\times 10^{-4}$ & 2.4374$\times 10^{-4}$ & 1.1140$\times 10^{-5}$ \\
  25 & 703   & 3.5116$\times 10^{-4}$ & 1.5779$\times 10^{-4}$ & 7.1668$\times 10^{-5}$ \\
  30 & 1013  & 2.4623$\times 10^{-4}$ & 1.1033$\times 10^{-4}$ & 5.0000$\times 10^{-5}$ \\
  35 & 1378  & 9.0828$\times 10^{-4}$ & 8.1648$\times 10^{-5}$ & 3.6900$\times 10^{-5}$ \\
  40 & 1800  & 1.8166$\times 10^{-4}$ & 6.2800$\times 10^{-5}$ & 2.8334$\times 10^{-5}$ \\
  45 & 2278  & 1.1076$\times 10^{-4}$ & 4.9816$\times 10^{-5}$ & 2.4428$\times 10^{-5}$ \\
  50 & 2813  & 8.9900$\times 10^{-5}$ & 4.0406$\times 10^{-5}$ & 1.8210$\times 10^{-5}$ \\
  55 & 3404  & 7.4468$\times 10^{-5}$ & 3.3484$\times 10^{-5}$ & 1.5072$\times 10^{-5}$ \\
  \hline \hline
  \multicolumn{5}{c}{$l_1-$error between the approximated solution and the exact solution}\\
  \multicolumn{5}{c}{at different instants}
\end{tabular}
\end{small}\end{center}

\noindent followed by the graphics (Figures 3. and 4.) of the
evolution of the approximation error  for the correspondent space
and time mesh sizes  considered.
\begin{figure}[htb]
    \begin{minipage}[h]{0.5\linewidth}
    \includegraphics[width=\linewidth]{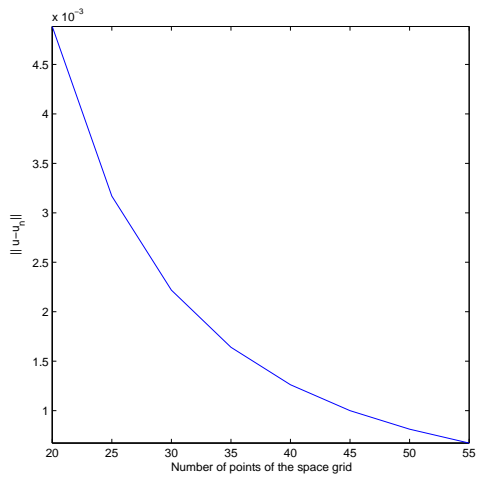}
    \caption{{\small $l_1-$error  for different space steps.}}
    \end{minipage}
    \begin{minipage}[htb]{0.5\linewidth}
    \includegraphics[width=\linewidth]{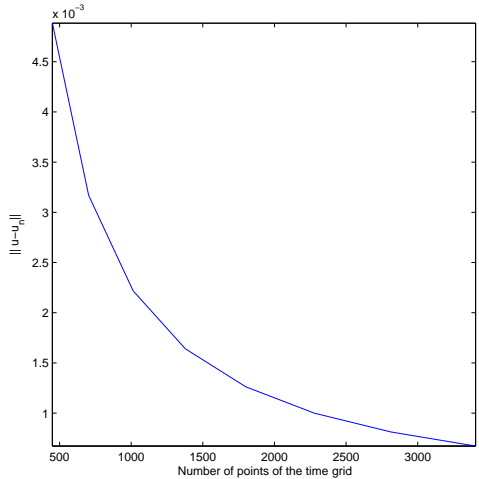}
    \caption{{\small $l_1-$error  for different time steps.}}
    \end{minipage}
    \end{figure}

\textbf{Example 3:} Finally, we conclude with an example of an exact
solution of lower regularity on the domain $\Omega=[-5,5]^3 \times
[0,2]$, namely an exact $C^1-$solution $u=(0,u_1,u_2,u_3)$ of
(\ref{nonlinearSchrodinger}), with
    \begin{gather*}
    u_1(x,t) = (e^{-t}-1)~(g(x_1)-g(-x_1))~(g(x_2)-g(-x_2))~(g(x_3)-g(-x_3)) \\
    u_2(x,t) = u_3(x,t) = 0,
    \end{gather*}
where $g$ is the auxiliary B-spline of order 3
    \begin{eqnarray*}
    g(y) & = & \left\{
    \begin{array}{ccl}
\frac{y^3}{6} & \mbox{if} & 0 \leq y < 1 \\ & & \\
-\frac{1}{3}+\frac{y}{2}+\frac{(y-1)^2}{2}-\frac{11(y-1)^3}{24} & \mbox{if} & 1 \leq y < 2 \\ & & \\
\frac{11}{24}+\frac{y}{8}-\frac{7(y-2)^2}{8}+\frac{3(y-2)^3}{8} & \mbox{if} & 2 \leq y < 3 \\ & & \\
\frac{11}{6}-\frac{y}{2}+\frac{(y-3)^2}{4}-\frac{(y-3)^3}{24} & \mbox{if} & 3 \leq y \leq 5 \\
    \end{array}
    \right..
    \end{eqnarray*}

Again, the corresponding right hand side $f = i \partial_t u -\Delta
u - |u^2|u.$ The following table gives the error of approximation
between the exact solution $u$ and its discrete approximation
$u_{h,\tau}$ for different mesh sizes considered.

\begin{center}\begin{small}
\begin{tabular}{|c|c|c|c|c|}
  \multicolumn{5}{c}{\textbf{Table 3}} \\
  \hline \hline
  N & M & t=0 & t=0.4 & t=0.8 \\
  \hline
  20 & 450   & 5.0846$\times 10^{-3}$ & 2.8819$\times 10^{-3}$ & 1.4024$\times 10^{-3}$  \\
  25 & 703   & 3.7149$\times 10^{-3}$ & 2.0388$\times 10^{-3}$ & 9.1607$\times 10^{-4}$  \\
  30 & 1013  & 2.7242$\times 10^{-3}$ & 1.4948$\times 10^{-3}$ & 6.7232$\times 10^{-4}$  \\
  35 & 1378  & 1.9355$\times 10^{-3}$ & 1.0610$\times 10^{-3}$ & 4.7688$\times 10^{-4}$  \\
  40 & 1800  & 1.4763$\times 10^{-3}$ & 8.1058$\times 10^{-4}$ & 3.6402$\times 10^{-4}$  \\
  45 & 2278  & 1.1856$\times 10^{-3}$ & 6.5030$\times 10^{-4}$ & 2.9248$\times 10^{-4}$  \\
  50 & 2813  & 9.1813$\times 10^{-4}$ & 4.5629$\times 10^{-4}$ & 2.0291$\times 10^{-4}$  \\
  55 & 3404  & 8.0086$\times 10^{-4}$ & 4.3975$\times 10^{-4}$ & 1.9751$\times 10^{-4}$  \\
  \hline \hline
  N & M & t=1.2 & t=1.6 & t=2 \\
  \hline
  20 & 450   & 7.3907$\times 10^{-4}$ & 2.4437$\times 10^{-4}$ & 1.9111$\times 10^{-4}$ \\
  25 & 703   & 4.1162$\times 10^{-4}$ & 1.8495$\times 10^{-4}$ & 8.4088$\times 10^{-5}$ \\
  30 & 1013  & 3.0239$\times 10^{-4}$ & 1.3550$\times 10^{-4}$ & 6.1402$\times 10^{-5}$ \\
  35 & 1378  & 2.1434$\times 10^{-4}$ & 9.6336$\times 10^{-5}$ & 4.3534$\times 10^{-5}$ \\
  40 & 1800  & 1.6362$\times 10^{-4}$ & 7.3510$\times 10^{-5}$ & 3.3166$\times 10^{-5}$ \\
  45 & 2278  & 1.3133$\times 10^{-4}$ & 5.9066$\times 10^{-5}$ & 2.6610$\times 10^{-5}$ \\
  50 & 2813  & 9.9006$\times 10^{-5}$ & 4.4078$\times 10^{-5}$ & 1.9410$\times 10^{-5}$ \\
  55 & 3404  & 8.8712$\times 10^{-5}$ & 3.9888$\times 10^{-5}$ & 1.7956$\times 10^{-5}$ \\
  \hline \hline
  \multicolumn{5}{c}{$l_1-$error between the approximated solution and the exact solution}\\
  \multicolumn{5}{c}{at different instants}
\end{tabular}
\end{small}\end{center}

The next graphics (Figures 5. and 6.) show the evolution of the
approximation error in $l_1-$norm for the different space mesh size
and time mesh size considered.
\begin{figure}[htb]
    \begin{minipage}[h]{0.5\linewidth}
    \includegraphics[width=\linewidth]{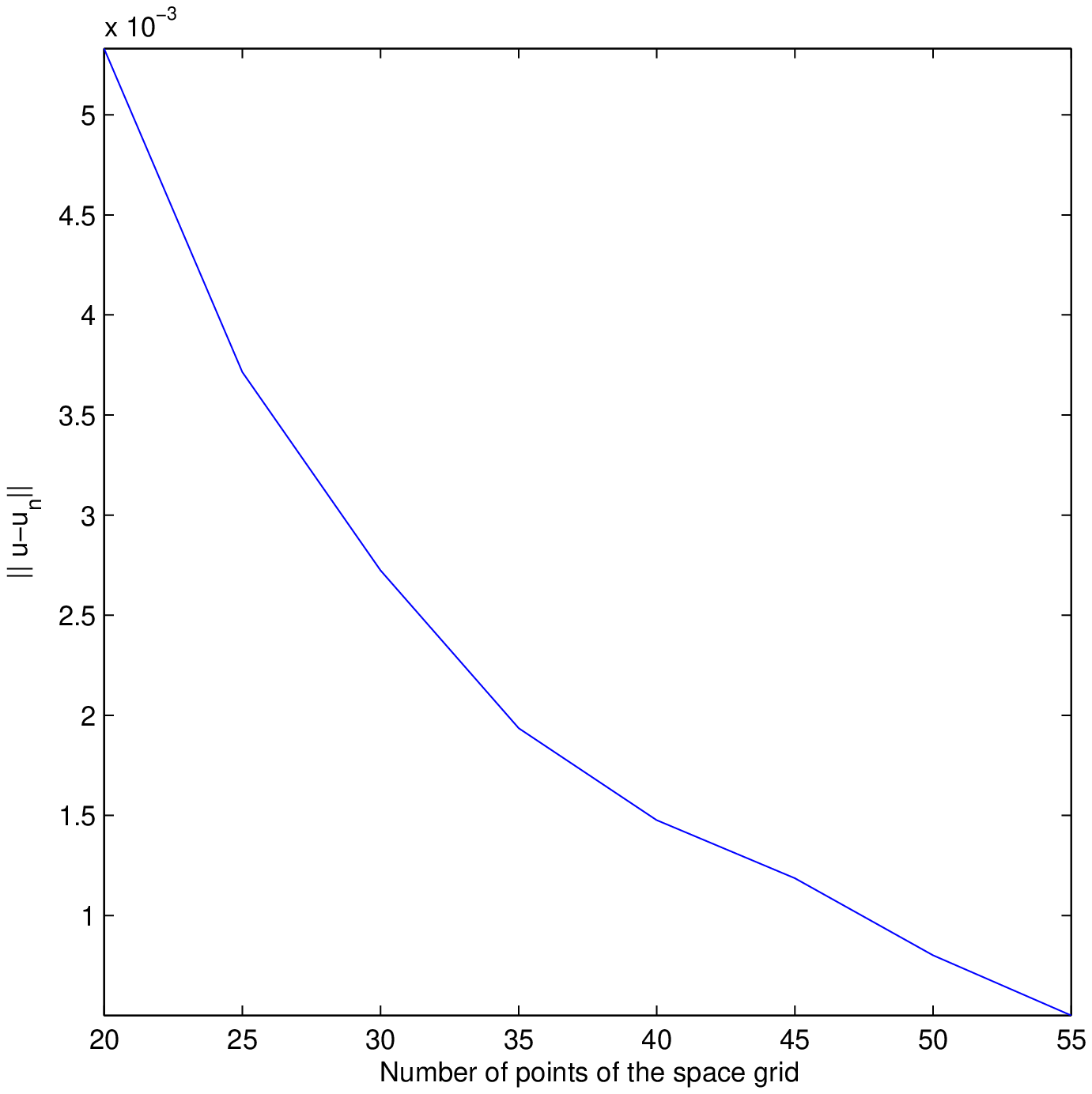}
    \caption{{\small $l_1-$error  for different space steps.}}
    \end{minipage}
    \begin{minipage}[htb]{0.5\linewidth}
    \includegraphics[width=\linewidth]{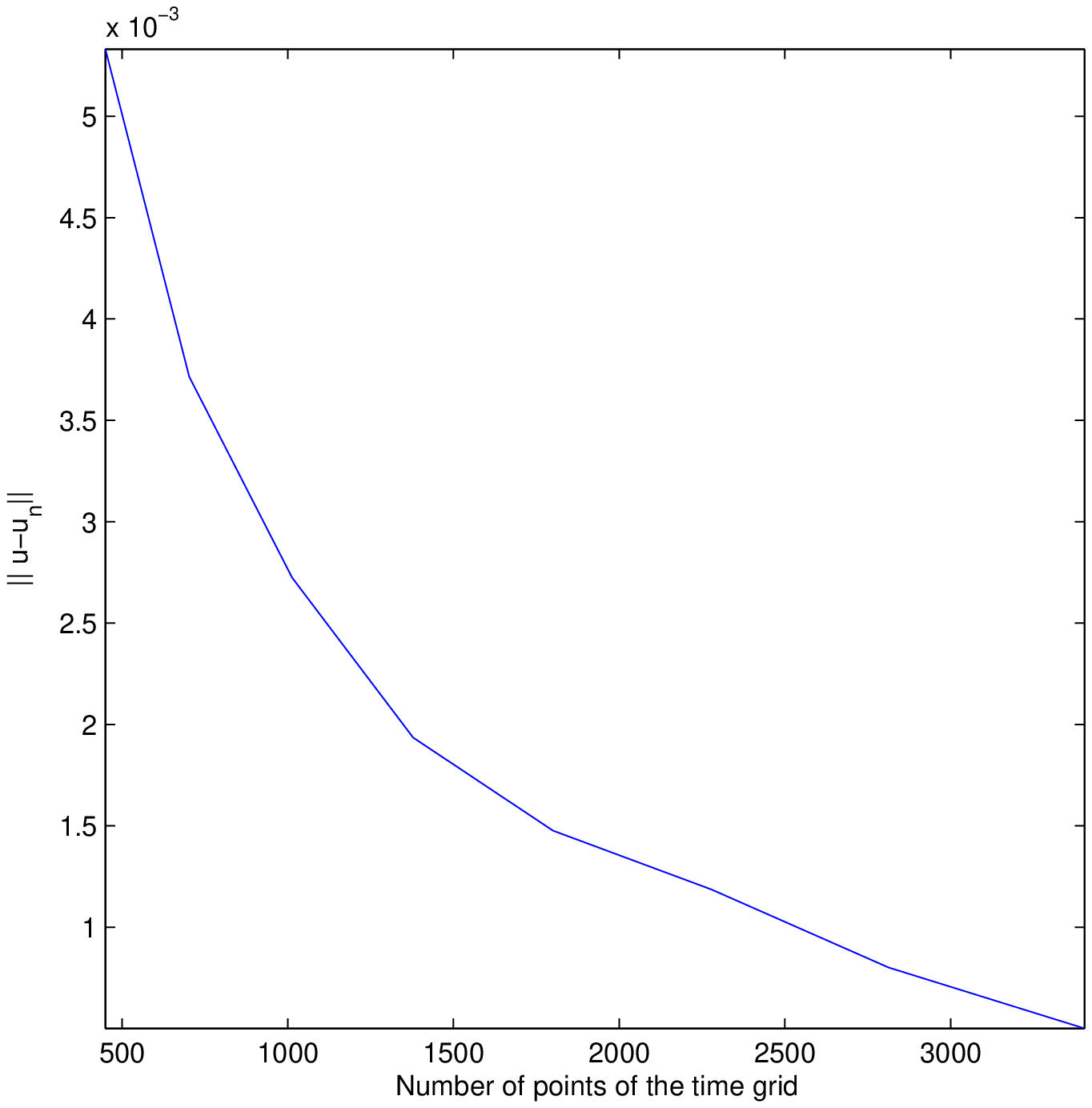}
    \caption{{\small $l_1-$error  for different time steps.}}
    \end{minipage}
    \end{figure}

Taking into account the previous graphics we are able to observe
that the order of convergence for the space coordinate is, in all
the examples, of order $\mathcal{O}(h^8)$, while for the time
coordinate we get, in all the examples, an order of convergence of
order $\mathcal{O}(\tau^{\frac{3}{2}})$. We remark that our method
seems to be stable under functions of lower regularity, since the
order of convergence for the space and time coordinates remains same
in all the three examples.


~\\

 {\bf Acknowledgement} The research of the first author was
(partially) supported by {\it Unidade de Investiga\c c\~ao
``Matem\'atica e Aplica\c c\~oes''} of the University of Aveiro. The
work of the second and third authors was supported by PhD-grants
\texttt{SFRH/BD/17657/2004}, \texttt{SFRH/BD/22646/2005}, of {\it
Funda\c c\~ao para a Ci\^encia e a Tecnologia}.

{\small }


\end{document}